\newcommand{\bx}{\mbox{\boldmath{$x$}}}
\newcommand{\bb}{\mbox{\boldmath{$b$}}}
\newcommand{\bn}{\mbox{\boldmath{$n$}}}
\newcommand{\bW}{\mbox{\boldmath{$W$}}}
\newcommand{\dx}{\mathrm{d}x}
\newcommand{\ds}{\mathrm{d}s}
\newcommand{\dt}{\mathrm{d}t}
\newtheorem{theorem}{Theorem}[section]
\newtheorem{lemma}[theorem]{Lemma}
\newtheorem{example}[theorem]{Example}
\newtheorem{remark}[theorem]{Remark}
\numberwithin{equation}{section}
\newenvironment{proof}[1][Proof]{\textbf{#1.} }
{\ \rule{0.75em}{0.75em}\smallskip}
\begin{document}

\begin{center}
\Large\bf Local Randomized Neural Networks with Discontinuous Galerkin Methods for Diffusive-Viscous Wave Equation
\end{center}

\begin{center}
Jingbo Sun\footnote{School of Mathematics and Statistics, Xi'an Jiaotong University, Xi'an, Shaanxi 710049, P.R. China. E-mail: {\tt jingbosun@stu.xjtu.edu.cn}.},
\quad 
Fei Wang\footnote{School of Mathematics and Statistics, Xi’an Jiaotong University, Xi’an, Shaanxi 710049, China. The work of this author was partially supported by the National Natural Science Foundation of China (Grant No. 12171383). Email: {\tt feiwang.xjtu@xjtu.edu.cn}.}
\end{center}

\medskip
\begin{quote}
  {\bf Abstract.} 
The diffusive-viscous wave equation is an advancement in wave equation theory, as it accounts for both diffusion and viscosity effects. This has a wide range of applications in geophysics, such as the attenuation of seismic waves in fluid-saturated solids and frequency-dependent phenomena in porous media. Therefore, the development of an efficient numerical method for the equation is of both theoretical and practical importance. Recently, local randomized neural networks with discontinuous Galerkin (LRNN-DG) methods have been introduced in \cite{Sun2022lrnndg} to solve elliptic and parabolic equations. Numerical examples suggest that LRNN-DG can achieve high accuracy, and can handle time-dependent problems naturally and efficiently by using a space-time framework. In this paper, we develop LRNN-DG methods for solving the diffusive-viscous wave equation and present numerical experiments with several cases. The numerical results show that the proposed methods can solve the diffusive-viscous wave equation more accurately with less computing costs than traditional methods.
 
\end{quote}

{\bf Keywords.} Diffusive-viscous wave equation, randomized neural networks, discontinuous Galerkin methods, space-time approach 
\medskip

\section{Introduction}

Petroleum exploration is the process of using various methods to obtain geological information about the underground, identify favorable areas and traps for oil and gas accumulation, estimate the size and production capacity of oil and gas fields, and discover and verify potential oil and gas resources. Numerical simulation of seismic waves can reduce costs and improve the success rate of petroleum exploration. The diffusive-viscous wave equation (DVWE) was proposed in \cite{Goloshubin2000dvwe, Korneev2004dvwe} to fully describe the frequency-dependent reflection and fluid-saturation in porous media by introducing frictional and viscous damping terms based on wave equations, which enables the detection of fluid-saturated layers. So effective numerical methods for solving the DVWE are essential for geological exploration.

In the past ten years, the diffusive-viscous wave equations have been studied and applied for various practical issues, see e.g. \cite{Quintal2007dvwe,Chen2013dvwe, Zhao2014dvwe}. Han et al. (\cite{Han2020well}) established a well-posedness theory of solution for a general initial boundary value problem of the diffusive-viscous wave equation. The first numerical analysis of the finite element method for the diffusive-viscous wave equation was presented in \cite{Han2021FEM}. Recently, the interior penalty discontinuous Galerkin (DG) method and local DG method were applied for solving the diffusive-viscous wave equation (\cite{Zhang2023dg,Ling2023LDG}). Discontinuous Galerkin method is a high-order accurate, robust, and flexible method that can handle complex geometries and boundary conditions, and incorporate physics-informed numerical fluxes.

With the advancement of neural networks, there has been an increasing number of studies dedicated to the numerical solution of partial differential equations (PDEs) based on neural networks in recent years. To name a few, these include the Deep Ritz Method (\cite{Ee2018deepRitz}), Deep Galerkin Method (\cite{Sirignano2018DGM}), Physical Informed Neural Networks (\cite{Raissi2019PINN}), Weak Adversarial Networks (\cite{Zang2020adversarialnns}), and Deep Nitsche Method (\cite{Liao2019deepRitzboundary}). These neural network-based methods construct loss functions involving PDEs or weak formulations, and rely on optimization solvers to train the networks.

Dong and Li proposed an approach called Local Extreme Learning Machine (ELM) and Domain Decomposition to solve PDEs in \cite{Dong2021locELM}. ELM is a special case of Randomized Neural Networks (RNNs, \cite{Igelnik1995RNN1, Igelnik1999RNN2, Pao1992RNN3, Pao1994RNN4}), where parameters are randomly assigned to links between hidden layers and then fixed during training. The links between the last hidden layer and output layer are determined using a least-squares method, thus avoiding the need for an optimization solver. In addition, Shang et al. proposed Deep Petrov-Galerkin Methods for solving PDEs by combining RNNs and Petrov-Galerkin formulation in \cite{Shang2022DeepPetrov}. Sun et al. proposed Local Randomized Neural Network with DG (LRNN-DG) Methods for solving PDEs in \cite{Sun2022lrnndg}. Numerical experiments demonstrate that LRNN-DG approaches can attain accuracy comparable to conventional methods such as Finite Element Methods and DG methods, but with a reduced number of degrees of freedom. Moreover, LRNN-DG methods can naturally adopt a space-time approach, enabling them to solve time-dependent problems without requiring time discretization and avoiding error accumulations.

In this paper, we develop LRNN-DG methods to solve the diffusive-viscous wave equation in the space-time approach. These methods use RNNs to construct space-time basis functions on each subdomain and apply DG formulation to couple them. The paper is structured as follows. Section \ref{sec:weak} describes the neural network architecture. Section \ref{dgformula} introduces three new discontinuous Galerkin formulations with RNNs for the space-time approximation of the diffusive-viscous wave equation. Section \ref{numericalex} demonstrates the performance of the proposed LRNN-DG methods with several numerical examples. Section \ref{summary} concludes the paper with some remarks.

\section{Neural network structure}
\label{sec:weak}

In this section, we introduce the neural network structure used in this work. 
A common type of deep learning model is the fully connected neural network. In this model, we omit the bias term of the output layer and use the following structure:
\begin{subequations}\label{fnn_structure}
\begin{align}
\mathcal{U}(\bx) &=\bW^{(L+1)} ( N^{(L)} \circ \cdots N^{(2)} \circ N^{(1)}(\bx)), \\
N^{(i)}(\bx) &=\sigma(\bW^{(i)}\bx+\bb^{(i)}),\ i= 1,2,\cdots, L,
\end{align}
\end{subequations}
where $\bx \in \Omega$ is the input vector, $N^{(i)}$ is the i-th hidden layer with the weight matrix $\bW^{(i)}$ and the bias vector $\bb^{(i)}$, $\sigma$ is a nonlinear activation function, $L$ is the numbers of the hidden layers, also called the depth of the network, and $\bW^{(L+1)}$ is the weight matrix of the output layer. The output layer performs a linear transformation on the final hidden layer. The set of all functions that can be represented by this network is denoted by
\begin{equation*}
\mathcal{M}(\theta,L,\Omega)=\{\mathcal{U}(\bx)= \bW^{(L+1)} (N^{(L)} \circ \cdots \circ N^{(1)}(\bx)), \bx\in\Omega\},
\end{equation*}
where $\theta = \{\bW^{(L+1)},(\bW^{(l)} , \bb^{(l)})^{L}_{l=1}\}$.

\begin{figure}[thpb] 
  \centering  
  \includegraphics[width=0.6\textwidth]{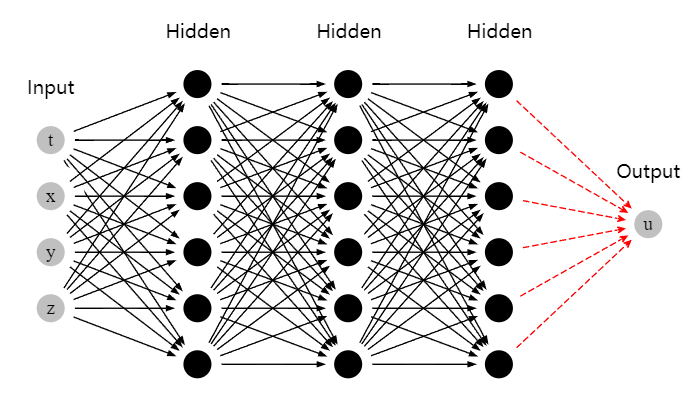}  
  \caption{The structure of neural networks.}
  \label{nn_structure}
\end{figure}

Unlike a function space, $\mathcal{M}(\theta,L,\Omega)$ is a set of functions that are obtained by optimizing the network parameters using methods such as stochastic gradient descent type algorithms. These methods are often slow and inaccurate. To overcome this limitation, we use the randomized neural network approach in this paper. A randomized neural network is a neural network where some parameters are randomly assigned and fixed. This approach does not require backpropagation to train the network. Instead, we randomly assign and fix the values of the hidden layer parameters, and then we use the least-squares method to find the output layer weights. As an example, Figure \ref{nn_structure} shows the structure of our model with 3-d space domain and 1-d time domain, here this model has 3 hidden layers and 6 neurons in each layer. We note black solid lines represent weights and biases that are randomly assigned and fixed and red dashed lines represent weights computed by the least-squares method.
The function space for RNNs can be written as 
\begin{equation}\label{rnn_space}
\mathcal{M}_{RNN} (D)=\left\{\mathcal{U}(\alpha, \theta, \bx) = \sum^M_{j=1} \alpha^D_j \phi^D (\theta_j, \bx) : \bx \in D\right\},
\end{equation}
where $D\subset\Omega$ is the domain, $\phi^D (\theta_j, \bx)$ is the output of the hidden layer with weight and bias $\theta_j$, $\alpha^D_j$ is the weight of the output layer and $M$ is the number of neurons in the last hidden layer. For simplicity, we denote $\phi^D (\theta_j, \bx)$ as $\phi^D_j(\bx)$ in the rest of the paper.

One example of randomized neural networks is the extreme learning machine (ELM) (\cite{Huang2006ELMtheorandapp}). It has been shown in \cite{Liu2014ELMfeasible} that ELM can achieve a similar generalization performance as regular neural networks if the activation functions and the initialization strategies for the fixed parameters are chosen appropriately.
This approach has shown good performance for solving partial differential equation problems in \cite{Dong2021locELM,Shang2022DeepPetrov,Sun2022lrnndg}, as it has advantages such as shorter training time and better accuracy. However, for complicated problems, a single neural network may lose accuracy, and we need to apply randomized neural networks on each subdomain. Therefore, in this work, we follow the ideas in \cite{Sun2022lrnndg} and develop local randomized neural networks with discontinuous Galerkin methods to solve the diffusive-viscous wave equation.

\section{LRNN-DG methods for DVWE}
\label{dgformula}

In the previous section, we mentioned that a single neural network may not be able to approximate complex problems well. Therefore, we need to split a domain into several subdomains and use a randomized neural network on each one. But how do we connect these local randomized neural networks as a global solution? There are two main methods: one is to impose continuity conditions on the interfaces between subdomains as in \cite{Dong2021locELM}, and the other is to apply discontinuous Galerkin (DG) formulation to join them together as in \cite{Sun2022lrnndg}. In this section, we develop local randomized neural networks with three DG schemes for solving the diffusive-viscous wave equation.

\subsection{LRNN-DG formulation}

Let us consider the diffusive-viscous wave equation with mixed boundary conditions of Dirichlet, Neumann, and Robin types. The initial-boundary value problem can be written as
\begin{subequations}\label{dvwe_strong}
\begin{align}
\frac{\partial^2 u}{\partial t^2} +\gamma\frac{\partial u}{\partial t} - \frac{\partial}{\partial t}{\rm div}(\eta \nabla u) - {\rm div}(\xi^2 \nabla u) &= f \quad &{\rm in}\ \Sigma, \label{strongeq}\\
u &= g_D \quad &{\rm on}\ I \times \Gamma_D, \label{strongboundcd}\\
\frac{\partial u}{\partial \boldsymbol{n} } &= g_N \quad &{\rm on}\ I \times \Gamma_N,\label{strongboundcn}\\
\frac{\partial u}{\partial \boldsymbol{n} } + \kappa u&= g_R \quad &{\rm on}\ I \times \Gamma_R,\label{strongboundcr}\\
{ u} &= u_0\quad &{\rm on}\ \{0\}\times\Omega,\label{stronginit1} \\
\frac{\partial u}{\partial t} &= w_0\quad &{\rm on}\ \{0\}\times\Omega,\label{stronginit2}
\end{align}
\end{subequations}
where $\Omega\subset\mathbb{R}^d$ $(d=1,2,3)$ is the spatial domain, and $I=(0,T)$ is the temporal domain. 
We denote $\Sigma = I\times\Omega$ as the space-time domain and $\partial \Omega$ as the boundary of $\Omega$. The boundary $\partial \Omega$ is partitioned into disjoint parts $\partial \Omega = {\overline {\Gamma_D}}\times{\overline {\Gamma_N}}\times{\overline {\Gamma_R}}$, where ${ {\Gamma_D}}$, ${ {\Gamma_N}}$, ${ {\Gamma_R}}$ are the Dirichlet, Neumann, and Robin boundaries, respectively. One or two of these parts can be empty. The functions $g_D$, $g_N$, and $g_R$ are the prescribed boundary values for each type of condition. The vector $\boldsymbol{n}$ is the unit outward normal to $\partial \Omega$. The coefficients $\gamma = \gamma(\bx)>0$, $\eta = \eta(\bx)>0$ and $\xi = \xi(\bx)>0$ represent the diffusive attenuation, the viscous attenuation and the wave propagation speed in the non-dispersive medium, respectively. $\kappa = \kappa(\bx)>0$ is the coefficient for the Robin boundary condition. The function $f(t,\bx)$ is the source term.

Before we proceed, we introduce some notation for dividing the space-time domain into subdomains. We partition the time interval $I$ into $N_t$ sub-intervals $\mathcal{D}_\tau =\{I_i =(t_{i-1},t_i), 0=t_0< t_1<\cdots<t_{N_t}=T\}$ with $\tau = \max\limits_{I_i\in {\cal D}_\tau}\{{\rm diam}(I_i)\}$, let ${\cal P}_\tau=\{t_i,i=0,\cdots,{N_t}\}$ be the set of time nodes, and $\mathcal{P}_\tau^{i} = \mathcal{P}_\tau \backslash \{t_0,t_{N_t}\}$ is the set of all interior points. We also decompose the spatial domain $\bar{\Omega}$ into a mesh $\{ \mathcal{T}_h \}$, where $h = \max_{K\in {\cal T}_h}\{{\rm diam}(K)\}$ and $\mathcal{T}_h$ consists of $N_s$ elements. We denote by $\mathcal{E}_h$ the union of all the faces (edges) of the spatial mesh, by $\mathcal{E}_h^i$ the set of the interior faces (edges) and by $\mathcal{E}^\partial_h = \mathcal{E}_h\backslash {\cal E}_h^i = {\cal E}_h^D\cup{\cal E}_h^N\cup{\cal E}_h^R$ the set of the boundary (faces) edges, where $ {\cal E}_h^D$, ${\cal E}_h^N$, ${\cal E}_h^R$ are (faces) edges corresponding to Dirichlet, Neumann, and Robin boundaries respectively. Then we use $\{\mathcal{D}_\tau\times\mathcal{T}_h\}$ to denote the decomposition of the space-time domain $\bar\Sigma$, which has $N_e=|\mathcal{D}_\tau\times\mathcal{T}_h|=N_t N_s$ elements. 

For any two adjacent elements $\sigma^+_h = I_i\times K^+$ and $\sigma^-_h= I_i\times K^-$ sharing a common spatial face $f_h$, we define $\boldsymbol{n}^{\pm}=\boldsymbol{n}|_{\partial K^{\pm}}$ as the unit outward normal vectors
on $\partial K^{\pm}$. For a scalar function $v$ and a
vector function $\boldsymbol{q}$, we write $v^{\pm}=v|_{\partial \sigma_h^{\pm}}$ and $\boldsymbol{q}^\pm=\boldsymbol{q}|_{\partial \sigma_h^{\pm}}$. We also define the averages $\{ \cdot \}$ and the jumps $\llbracket \cdot \rrbracket$, $[ \cdot ]$ on $f_h \in (\mathcal{D}_\tau\times\mathcal{E}_h^{i})$ by
\begin{subequations}
\begin{align*}
\{ v \} = \frac{1}{2} (v^+ + v^-),& \quad \llbracket v\rrbracket = v^+ \boldsymbol{n}^+ + v^- \boldsymbol{n}^-, \\
\{ \boldsymbol{q} \} = \frac{1}{2}(\boldsymbol{q}^+ +\boldsymbol{q}^-),& \quad
[\boldsymbol{q}] = \boldsymbol{q}^+ \cdot \boldsymbol{n}^+ +\boldsymbol{q}^- \cdot \boldsymbol{n}^-.
\end{align*}
If $f_h \in (\mathcal{D}_\tau\times \mathcal{E}_h^{\partial})$, we set
\begin{subequations}
\begin{align*}
\llbracket v\rrbracket = v \boldsymbol{n},\quad
\{ \boldsymbol{q} \} = \boldsymbol{q},
\end{align*}
\end{subequations}
where $\boldsymbol{n}$ is the unit outward normal vector on $\partial\Omega$.

For any two adjacent elements $\sigma^+_\tau = I_{i+1}\times K$ and $\sigma^-_\tau= I_i\times K$ sharing a common temporal face $f_\tau \in\{t_i\}\times \mathcal{T}_h$, we write $w(t^{\pm}_i,\bx)=w(t_i,\bx)|_{\partial \sigma_\tau^{\pm}}$ for a scalar function $w$ and define the averages $\{ \cdot \}$ and the jumps $[ \cdot ]$ on $f_\tau \in (\mathcal{P}_\tau^{i}\times \mathcal{T}_h)$ by
\begin{align*}
\{ w(t_i,\bx) \} = \frac{1}{2} \left(w(t_i^+,\bx) + w(t_i^-,\bx)\right),& \quad [w(t_i,\bx)] =w(t_i^+,\bx) - w(t_i^-,\bx).
\end{align*}
\end{subequations}
If $f_\tau \in (\mathcal{P}_\tau^{\partial}\times \mathcal{T}_h)$, we set
\begin{subequations}
\begin{align*}
[w(t_0,\bx)] = - w(t_0,\bx),\quad [w(t_{N_t},\bx)] =  w(t_{N_t},\bx),\quad
\{ w(t,\bx) \} = w(t,\bx).
\end{align*}
\end{subequations}

To derive the formulation of the LRNN-DG method for DVWE, we use the following identities:
\begin{align}
\int_K \nabla v \cdot \boldsymbol{q}\,\dx &= -\int_K v~(\nabla\cdot \boldsymbol{q}) \,\dx + \int_{\partial K} v\,\boldsymbol{q}\cdot\bn_K\,\ds,\label{ibps}\\
\sum_{K\in \mathcal{T}_h} \int_{\partial K} v \boldsymbol{q} \cdot \boldsymbol{n}_K \, \ds
&= \int_{{\cal E}_h} \llbracket v\rrbracket \cdot \{\boldsymbol{q}\}\, \ds
+ \int_{{\cal E}_h^i} \{v\} \cdot [\boldsymbol{q}]\, \ds,\label{iden_dg}
\\
\sum_{I_i\in \mathcal{D}_\tau} (v w)|^{t_i}_{t_{i-1}}
&= \sum^{N_t}_{i=0} \,[v(t_i,\bx)]  \{w(t_i,\bx)\}
+ \sum^{N_t -1}_{i=1} \,\{v(t_i,\bx)\} [w(t_i,\bx)].\label{iden_tdg}
\end{align}

We introduce the LRNN-DG function spaces based on the decomposition of the space-time domain and the local randomized neural network function spaces \eqref{rnn_space}: 
\begin{align}
V^\tau_h&=\{v_h \in L^2(\Sigma): \;v^\tau_h |_\sigma \in \mathcal{M}_{RNN}(\sigma)\quad \forall\,\sigma\in\mathcal{D}_\tau\times\mathcal{T}_h \},\label{scalspace}\\
\boldsymbol{Q}^\tau_h&=\{\boldsymbol{q}^\tau_h \in [L^2(\Sigma)]^d: \;\boldsymbol{q}^\tau_h |_\sigma \in \left[\mathcal{M}_{RNN}(\sigma)\right]^d\quad \forall\,\sigma\in\mathcal{D}_\tau\times\mathcal{T}_h \}\label{vecspace}.
\end{align}
Instead of using polynomial functions for the trial and test functions as in conventional DG methods, we use the space $V^\tau_h$ consisting of many random basis functions from neural networks. This allows us to leverage the strong approximation ability of neural networks to approximate the DVWE in a space-time approach.

To derive the DG formulation, let us rewrite Eq. \eqref{strongeq} as the following system of equations,
\begin{align}
{\boldsymbol{p}} = \nabla u \quad {\rm in} \ \Sigma, \label{str_mix1}\\
w = \frac{\partial u}{\partial t}\quad {\rm in} \ \Sigma,\label{str_mix2}\\
\frac{\partial w}{\partial t} +\gamma w - \nabla \cdot (\eta \nabla w) - \nabla \cdot ({\xi^2 \boldsymbol{p}}) = f \quad {\rm in} \ \Sigma.\label{str_mix3}
\end{align}
For any subdomain $\sigma\in\mathcal{D}_\tau\times\mathcal{T}_h$, we multiply test functions $v^\tau_h,b^\tau_h \in V^\tau_h$ and $\boldsymbol{q}^\tau_h\in\boldsymbol{Q}^\tau_h$ on both sides of Eqs. \eqref{str_mix1}-\eqref{str_mix3}, respectively, and integrate over $\sigma$. Then we apply the integration by parts formula and use numerical traces $\widehat{u^\tau_h}$, $\widehat{\boldsymbol{p}^\tau_h}$ and $\widehat{\nabla w^\tau_h}$ to approximate $u$, $\boldsymbol{p}$ and $\nabla w$ on the spatial faces $f\in \mathcal{D}_\tau\times\mathcal{E}_h$ and use numerical traces $\widetilde{u^\tau_h}$ and $\widetilde{\boldsymbol{p}^\tau_h}$ to approximate $u$ and $\boldsymbol{p}$ on the temporal faces $f\in \mathcal{P}_\tau\times\mathcal{T}_h$. This leads to
\begin{align*}
&\int_{\sigma} {\boldsymbol{p}^\tau_h} \cdot {\boldsymbol{q}^\tau_h} dx dt = - \int_{\sigma} u^\tau_h \nabla \cdot {\boldsymbol{q}^\tau_h} \dx\dt + \int_{I_i \times \partial K} \widehat{u^\tau_h} \boldsymbol{q}^\tau_h\cdot\boldsymbol{n}_K \ds\dt,\\
&\int_{\sigma} w^\tau_h b^\tau_h \dx\dt = \int_{K}(\widetilde{u^\tau_h} b^\tau_h) |^{t_i}_{t_{i-1}}\dx -\int_{\sigma} u^\tau_h \frac{\partial b^\tau_h}{\partial t} \dx\dt,\\
& \int_{K}(\widetilde{w^\tau_h} v^\tau_h) |^{t_i}_{t_{i-1}}\dx -\int_{\sigma} w^\tau_h\frac{\partial v^\tau_h}{\partial t} \dx \dt + \int_{\sigma} \gamma w^\tau_h v^\tau_h \dx\dt -\int_{I_i\times\partial K} \eta \widehat{\nabla w^\tau_h} \cdot \boldsymbol{n}_K v^\tau_h \ds\dt \\
&+ \int_{\sigma}\eta {\nabla w^\tau_h} \cdot \nabla v^\tau_h \dx\dt -\int_{I_i\times\partial K} \xi^2 \widehat{\boldsymbol{p}^\tau_h} \cdot \boldsymbol{n} v^\tau_h \ds\dt +\int_{\sigma} \xi^2 \boldsymbol{p}^\tau_h \cdot \nabla v^\tau_h\dx\dt=  \int_{\sigma} f v^\tau_h \dx \dt.
\end{align*}

Next, we sum over all the elements and apply integration by parts and Eq. \eqref{iden_dg}. To eliminate 
$\boldsymbol{p}^\tau_h$, $w^\tau_h$, we choose $\boldsymbol{q}^\tau_h = \nabla_h v^\tau_h$, $b^\tau_h = \frac{\partial v^\tau_h}{\partial t}$ 
and let $w^\tau_h = \frac{\partial_\tau u^\tau_h}{\partial_\tau t}$. Then a combination gives us,

\begin{equation*}
\begin{aligned}
&- \int_{\Sigma}\frac{\partial_\tau u^\tau_h}{\partial_\tau t} \frac{\partial_\tau v^\tau_h}{\partial_\tau t} \dx \dt -\sum^{N_t}_{i=0}\int_{\mathcal{T}_h}[ \widetilde{u^\tau_h}(t_i,\bx)-u^\tau_h(t_i,\bx) ] \left\{ \frac{\partial_\tau v^\tau_h}{\partial_\tau t}(t_i,\bx)\right\}\dx \\
&-\sum^{N_t-1}_{i=1}\int_{\mathcal{T}_h}\{ \widetilde{u^\tau_h}(t_i,\bx)-u^\tau_h(t_i,\bx) \}  \left[ \frac{\partial_\tau v^\tau_h}{\partial_\tau t}(t_i,\bx)\right]\dx
+\int_{\Sigma}\gamma \frac{\partial_\tau u^\tau_h}{\partial_\tau t} v^\tau_h \dx \dt\\
&+\int_{\Sigma}\eta \nabla_h \left(\frac{\partial_\tau u^\tau_h}{\partial_\tau t}\right)\cdot\nabla_h v^\tau_h \dx \dt - \int_{\mathcal{D}_\tau\times\mathcal{E}^{}_h} \eta \llbracket v^\tau_h\rrbracket \cdot\{\widehat{\nabla_h w^\tau_h}\} \ds\dt - \int_{\mathcal{D}_\tau\times\mathcal{E}^{i}_h} \eta \left\{ v^\tau_h\right\} \cdot\left[\widehat{\nabla_h w^\tau_h}\right] \ds\dt\\
&+ \int_{\Sigma}\xi^2\nabla_h u^\tau_h \cdot\nabla_h v^\tau_h \dx \dt +\int_{\mathcal{D}_\tau\times\mathcal{E}^{}_h}\xi^2\llbracket \widehat{u^\tau_h}-u^\tau_h \rrbracket \cdot \left\{ \nabla_h v^\tau_h\right\}\ds\dt 
+\int_{\mathcal{D}_\tau\times\mathcal{E}^{i}_h}\xi^2\{ \widehat{u^\tau_h}-u^\tau_h \} \cdot [ \nabla_h v^\tau_h ]\ds\dt \\
&-\int_{\mathcal{D}_\tau\times\mathcal{E}^{}_h}\xi^2\llbracket v^\tau_h \rrbracket \cdot \{ \widehat{\boldsymbol{p}^\tau_h}\}\ds\dt -\int_{\mathcal{D}_\tau\times\mathcal{E}^{i}_h}\xi^2\{ v^\tau_h \} \cdot [  \widehat{\boldsymbol{p}^\tau_h} ]\ds\dt 
+\sum^{N_t}_{i=0}\int_{\mathcal{T}_h}[ v^\tau_h(t_i,x) ]  \{\widetilde{w^\tau_h}(t_i,\bx)\}\dx \\
&+\sum^{N_t-1}_{i=1}\int_{\mathcal{T}_h}\{ v^\tau_h(t_i,\bx) \}  [ \widetilde{w^\tau_h}(t_i,\bx)]\dx
=\int_{\Sigma}fv^\tau_h\dx\dt,
\end{aligned}
\end{equation*}
where $\nabla_hv_h^\tau$ is the broken gradient of $v_h^\tau$ with respect to the mesh $\mathcal{T}_h$ and $\frac{\partial_\tau v^\tau_h}{\partial_\tau t}$ is the broken partial derivative of $v_h^\tau$ with respect to the partition $\mathcal{D}_\tau$.

The consistency and stability of the DG scheme depend on the appropriate choices of numerical fluxes. We choose the following numerical fluxes in this paper
\begin{subequations}
\begin{align*}
\widetilde{u^\tau_h}(t_i,\bx) = \{ u^\tau_h (t_i,\bx)\},\ \ \widetilde{w^\tau_h}(t_i,\bx) = \left\{ \frac{\partial_\tau u^\tau_h}{\partial_\tau t}(t_i,\bx) \right\} - \beta_1 [ u^\tau_h(t_i,\bx)] \quad &on \ f\in\mathcal{P}^i_\tau\times\mathcal{T}_h,\\
\widetilde{u^\tau_h}(t_0,\bx) = u_0,\ \ \widetilde{w^\tau_h}(t_0,\bx)= -w_0 - \beta_1 (u_0 -u^\tau_h(t_0,\bx)) \quad &on \ f\in\{t_0\}\times\mathcal{T}_h,\\
\widetilde{u^\tau_h}(t_{N_t},\bx) = u^\tau_h(t_{N_t},\bx),\ \ \widetilde{w^\tau_h}(t_{N_t},\bx) = \frac{\partial_\tau u^\tau_h}{\partial_\tau t}(t_{N_t},\bx) \quad &on \ f\in\{t_{N_t}\}\times\mathcal{T}_h,\\
\widehat{u^\tau_h} = \{ u^\tau_h \},\ \ \widehat{\boldsymbol{p}^\tau_h} = \{ \nabla_h u^\tau_h \} - \beta_2 \llbracket u^\tau_h\rrbracket \quad &on \ f\in\mathcal{D}_\tau\times\mathcal{E}^i_h,\\
\widehat{u^\tau_h} =g,\ \ \widehat{\boldsymbol{p}^\tau_h} = \nabla_h u^\tau_h - \beta_2 (u-g_D) \boldsymbol{n} \quad &on \ f\in\mathcal{D}_\tau\times\mathcal{E}^D_h,\\
\widehat{u^\tau_h} =u^\tau_h,\ \ \widehat{\boldsymbol{p}^\tau_h} \cdot \boldsymbol{n}   = g_N \quad &on \ f\in \mathcal{D}_\tau\times\mathcal{E}^N_h,\\
\widehat{u^\tau_h} =u^\tau_h,\ \ \widehat{\boldsymbol{p}^\tau_h} \cdot \boldsymbol{n} + \kappa u^\tau_h = g_R \quad &on \ f\in \mathcal{D}_\tau\times\mathcal{E}^R_h,\\
\widehat{\nabla_h w^\tau_h} =\left\{\nabla_h \left( \frac{\partial_\tau u^\tau_h}{\partial_\tau t}\right)\right\} \quad &on \ f\in\mathcal{D}_\tau\times\mathcal{E}_h,
\end{align*}
\end{subequations}
where $\beta_1$ and $\beta_2$ are penalty parameters that may vary depending on the face $f$.
Then the LRNN-DG scheme for solving the diffusive-viscous wave equation \eqref{dvwe_strong} is: Find $u^\tau_h \in V_h^\tau$ such that
\begin{equation}\label{timec0for}
\begin{aligned}
B_{h\tau}(u^\tau_h,v^\tau_h) = l(v^\tau_h) \quad \forall v^\tau_h\in V^\tau_h,
\end{aligned}
\end{equation}
where
\begin{align}\label{lrnndg_B}
B_{h\tau}(u^\tau_h,v^\tau_h)=
&- \int_{\Sigma}\frac{\partial_\tau u^\tau_h}{\partial_\tau t} \frac{\partial_\tau v^\tau_h}{\partial_\tau t} \dx \dt + \int_{\Sigma}\gamma \frac{\partial_\tau u^\tau_h}{\partial_\tau t} v^\tau_h \dx \dt +\int_{\Sigma}\eta \nabla_h \left(\frac{\partial_\tau u^\tau_h}{\partial_\tau t}\right)\cdot\nabla_h v^\tau_h \dx \dt \nonumber\\ 
&+ \int_{\Sigma}\xi^2 \nabla_h u^\tau_h \cdot\nabla_h v^\tau_h \dx \dt +\sum^{N_t -1}_{i=0}\int_{\mathcal{T}_h}[ u^\tau_h(t_i,\bx) ] \left\{ \frac{\partial_\tau v^\tau_h}{\partial_\tau t}(t_i,\bx)\right\}\dx \nonumber\\ 
&+\sum^{N_t}_{i=1}\int_{\mathcal{T}_h}[ v^\tau_h(t_i,\bx)] \left\{\frac{\partial_\tau u^\tau_h}{\partial_\tau t}(t_i,\bx)\right\}\dx 
-\sum^{N_t -1}_{i=0}\int_{\mathcal{T}_h}\beta_1[ u^\tau_h(t_i,\bx) ]  [ v^\tau_h(t_i,\bx) ]\dx  \nonumber\\ 
&-\int_{\mathcal{D}_\tau \times\mathcal{E}^{}_h} \eta \llbracket v^\tau_h\rrbracket\cdot \left\{ \nabla_{h} \left(\frac{\partial_\tau u^\tau_h}{\partial_\tau t}\right) \right\} \ds\dt - \int_{\mathcal{D}_\tau \times \left(\mathcal{E}^{i}_h\cup\mathcal{E}^{D}_h\right)} \xi^2 \llbracket u^\tau_h\rrbracket\cdot \{ \nabla_{h}v^\tau_h \} \ds\dt \nonumber\\ 
&- \int_{\mathcal{D}_\tau \times \left(\mathcal{E}^{i}_h\cup\mathcal{E}^{D}_h\right)} \xi^2 \llbracket v^\tau_h\rrbracket\cdot \{ \nabla_{h}u^\tau_h \} \ds\dt  + \int_{\mathcal{D}_\tau \times\mathcal{E}^{R}_h} \xi^2 \kappa u^\tau_h v^\tau_h\ds\dt  \nonumber\\ 
& + \int_{\mathcal{D}_\tau \times \left(\mathcal{E}^{i}_h\cup\mathcal{E}^{D}_h\right)} \beta_2\xi^2 \llbracket u^\tau_h\rrbracket \cdot \llbracket v^\tau_h\rrbracket \ds\dt,
\end{align}
\begin{align}\label{lrnndg_l}
l(v^\tau_h)= 
&\int_{\Sigma} f v^\tau_h \dx \dt - \int_{\mathcal{T}_h} u_0 \frac{\partial_\tau v^\tau_h}{\partial_\tau t}(0,\bx)\dx - \int_{\mathcal{T}_h} w_0 v^\tau_h(0,\bx)\dx - \int_{\mathcal{T}_h}\beta_1 u_0 v^\tau_h(0,\bx)\dx \nonumber\\ 
&- \int_{ \mathcal{D}_\tau\times{\cal E}_h^{D}}\xi^2 g_D \boldsymbol{n}\cdot \nabla_{h} v^\tau_{h} \ds\dt + \int_{ \mathcal{D}_\tau\times{\cal E}_h^{N}} \xi^2 g_N v^\tau_h \ds\dt\nonumber\\ 
&+ \int_{ \mathcal{D}_\tau\times{\cal E}_h^{R}}\xi^2 g_R  v^\tau_{h} \ds\dt + \int_{ \mathcal{D}_\tau\times{\cal E}_h^{D}} \beta_2 \xi^2 g_D v^\tau_{h} \ds\dt.
\end{align}

Since we use local RNN for each subdomain, we only need to solve for the parameters $U$ of the output layers of different local networks, while other parameters are randomly assigned and fixed throughout the learning process. From Eq. \eqref{lrnndg_B}, we have the global stiffness matrix $\mathbb{A}$ and from Eq. \eqref{lrnndg_l}, we get the right-hand side $L$. Then, we solve the least-squares problem for the linear system 
$$ \mathbb{A} U= L $$ 
to obtain $U$, which gives us a numerical solution of DVWE.

\begin{lemma}[Consistency]\label{lem:consis}
The LRNN-DG scheme is consistent, i.e., for the solution $u\in C^1(I; H^2(\Omega))$ of the DVWE \eqref{dvwe_strong}, we have
\begin{equation}\label{consis_st}
B_{h\tau}(u,v^\tau_h)= l(v^\tau_h)\quad\forall v^\tau_h\in V^\tau_h.
\end{equation}
\end{lemma}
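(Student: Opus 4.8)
The plan is to substitute the exact solution $u$ directly into $B_{h\tau}(\cdot,\cdot)$ and show term by term that it collapses to $l(v^\tau_h)$. The organizing principle is that the numerical fluxes are consistent: evaluated at $u$ they reproduce the true traces, so every jump- and penalty-type term either vanishes on interior faces or reduces to the prescribed boundary/initial data, while the volume terms recombine, through integration by parts, into the strong residual of \eqref{strongeq}.

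First I would exploit the regularity $u\in C^1(I;H^2(\Omega))$. Since $u(t,\cdot)\in H^2(\Omega)$, both $u$ and $\nabla u$ have single-valued traces across interior spatial faces, so $\llbracket u\rrbracket=0$ and $[\nabla u]=0$ on $\mathcal{D}_\tau\times\mathcal{E}^i_h$; since $u\in C^1$ in time, $u$ and $\partial_t u$ are continuous across interior temporal nodes, so $[u(t_i,\bx)]=0$ and $\{\partial_t u(t_i,\bx)\}=\partial_t u(t_i,\bx)$ for $1\le i\le N_t-1$. This annihilates the interior parts of every coupling and penalty term: the $\xi^2$ penalty and the term $\xi^2\llbracket u\rrbracket\cdot\{\nabla_h v^\tau_h\}$ survive only on $\mathcal{E}^D_h$, while the temporal penalty $\beta_1[u(t_i)][v^\tau_h(t_i)]$ together with $[u(t_i)]\{\partial_t v^\tau_h(t_i)\}$ survive only at $t_0$, where I would insert $u(0,\bx)=u_0$ from \eqref{stronginit1}.

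Next I would integrate the four volume terms by parts to expose the differential operator. Using \eqref{ibps}--\eqref{iden_dg} element by element, $\int_\Sigma\xi^2\nabla_h u\cdot\nabla_h v^\tau_h$ becomes $-\int_\Sigma\mathrm{div}(\xi^2\nabla u)\,v^\tau_h$ plus spatial face terms, and $\int_\Sigma\eta\nabla_h(\partial_t u)\cdot\nabla_h v^\tau_h$ becomes $-\int_\Sigma\frac{\partial}{\partial t}\mathrm{div}(\eta\nabla u)\,v^\tau_h$ plus faces; using \eqref{iden_tdg}, $-\int_\Sigma\partial_t u\,\partial_t v^\tau_h$ becomes $\int_\Sigma\partial_{tt}u\,v^\tau_h$ plus temporal face terms. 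Because $[\nabla u]=0$, the interior $\{v^\tau_h\}\cdot[\cdot]$ contributions drop, and I would pair the remaining spatial face terms: the $\eta$-term from integration by parts cancels against $-\int\eta\llbracket v^\tau_h\rrbracket\cdot\{\nabla_h\partial_t u\}$, while the $\xi^2$-term combines with $-\int\xi^2\llbracket v^\tau_h\rrbracket\cdot\{\nabla_h u\}$ so that only the Neumann and Robin faces remain. The four volume integrals then assemble into $\int_\Sigma\big(\partial_{tt}u+\gamma\partial_t u-\frac{\partial}{\partial t}\mathrm{div}(\eta\nabla u)-\mathrm{div}(\xi^2\nabla u)\big)v^\tau_h$, which equals $\int_\Sigma f v^\tau_h$ by \eqref{strongeq}.

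To close, I would match the surviving boundary and endpoint terms to $l(v^\tau_h)$: on $\mathcal{E}^N_h$ the leftover face term is $\int\xi^2(\partial u/\partial\boldsymbol{n})v^\tau_h=\int\xi^2 g_N v^\tau_h$ by \eqref{strongboundcn}; on $\mathcal{E}^R_h$ the leftover gradient term plus $\int\xi^2\kappa u v^\tau_h$ gives $\int\xi^2 g_R v^\tau_h$ via \eqref{strongboundcr}; on $\mathcal{E}^D_h$ the substitution $\llbracket u\rrbracket=g_D\boldsymbol{n}$ turns the consistency and penalty terms into the two Dirichlet contributions of $l$; and at $t_0$ the conditions \eqref{stronginit1}--\eqref{stronginit2} convert the remaining temporal terms into the $u_0$- and $w_0$-contributions. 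I expect the main obstacle to be exactly this temporal bookkeeping: the interior temporal face terms produced by integrating $-\int_\Sigma\partial_t u\,\partial_t v^\tau_h$ by parts must be shown to cancel against the coupling term $\sum_i[v^\tau_h(t_i)]\{\partial_t u(t_i)\}$, and the endpoint contributions at $t_0$ and $t_{N_t}$ must reproduce $l$ precisely; verifying this requires careful attention to the nonstandard jump/average conventions at $t_0,t_{N_t}$ and to the orientation-dependent sign in the flux $\widetilde{w^\tau_h}(t_0,\bx)=-w_0-\beta_1(u_0-u^\tau_h(t_0,\bx))$, which is where the risk of a sign slip is greatest.
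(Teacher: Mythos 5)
Your overall route is the same as the paper's: substitute the exact solution, use $u\in C^1(I;H^2(\Omega))$ to annihilate the interior jump and penalty terms, integrate the volume terms back by parts via \eqref{ibps}--\eqref{iden_tdg} to expose the strong residual \eqref{strongeq}, and match the surviving boundary and initial terms against $l(v^\tau_h)$. Your spatial bookkeeping (cancellation of the $\eta$- and $\xi^2$-face terms, the Neumann/Robin/Dirichlet matching) is exactly what the paper does and is correct.

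The genuine gap is the one step you explicitly defer, the temporal bookkeeping at $t_0$, and it is not a routine verification: with the definitions as printed it does not close. Integrating $-\int_\Sigma \frac{\partial u}{\partial t}\frac{\partial_\tau v^\tau_h}{\partial_\tau t}\dx\dt$ by parts in time produces at $t_0$ the contribution $+\int_{\mathcal{T}_h} w_0\, v^\tau_h(0,\bx)\dx$; the interior-node face terms cancel against the coupling sum $\sum_{i=1}^{N_t}\int_{\mathcal{T}_h}[v^\tau_h(t_i,\bx)]\{\frac{\partial_\tau u}{\partial_\tau t}(t_i,\bx)\}\dx$ (taking \eqref{iden_tdg} at face value) and the $t_{N_t}$ terms cancel each other, but that coupling sum starts at $i=1$, so nothing in $B_{h\tau}$ offsets the $t_0$ term. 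Since $l(v^\tau_h)$ contains $-\int_{\mathcal{T}_h}w_0\,v^\tau_h(0,\bx)\dx$, one is left with
\begin{equation*}
B_{h\tau}(u,v^\tau_h)-l(v^\tau_h)=2\int_{\mathcal{T}_h}w_0\,v^\tau_h(0,\bx)\dx ,
\end{equation*}
so \eqref{consis_st} fails whenever $w_0\not\equiv 0$. A one-slab check makes this concrete: take $u(t,\bx)=t$ on $I=(0,1)$ with constant $\gamma$ (so $u_0=0$, $w_0=1$, $f=\gamma$, and all spatial terms are trivial); then $B_{h\tau}(u,v)=v(0)+\gamma\int_\Sigma v\dx\dt$ while $l(v)=\gamma\int_\Sigma v\dx\dt-v(0)$. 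The root cause is that the flux $\widetilde{w^\tau_h}(t_0,\bx)=-w_0-\beta_1(u_0-u^\tau_h(t_0,\bx))$ equals $-w_0$ at the exact solution, not the true trace $+w_0$; consistency is restored only if that $w_0$ carries a plus sign (equivalently, if the $w_0$ term in \eqref{lrnndg_l} is $+\int_{\mathcal{T}_h} w_0 v^\tau_h(0,\bx)\dx$). The paper's own proof silently writes $-\int_{\mathcal{T}_h}w_0v^\tau_h(0,\bx)\dx$ after invoking \eqref{iden_tdg}, which is precisely the sign slip you warned about. So your instinct about where the risk lies was right, but flagging the risk is not resolving it: as written, your proposal (like the paper's argument) does not establish \eqref{consis_st}, and carrying out the deferred computation honestly shows the identity needs a sign correction in the scheme before it can hold.
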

\begin{proof}
We know that $u\in C^1(I; H^2(\Omega))$ implies $\llbracket u\rrbracket=0$, $[ \nabla u]=0$ in $\mathcal{D}_\tau\times\mathcal{E}^i_h$, $u=g_D$ in $\mathcal{D}_\tau\times\mathcal{E}^D_h$,  $\nabla u \cdot\boldsymbol{n} =g_N$ in $\mathcal{D}_\tau\times\mathcal{E}^N_h$,  $\nabla u \cdot\boldsymbol{n} + \kappa u=g_R$ in $\mathcal{D}_\tau\times\mathcal{E}^R_h$, $[ u(t_i,\bx)] = 0$, $\left[ \frac{\partial u}{\partial t}(t_i,\bx) \right] =0$ in $\mathcal{P}^i_\tau\times\mathcal{T}_h$, $u(t_0,\bx)=u_0(\bx)$ and $\frac {\partial u}{\partial t}(t_0,\bx)=w_0(\bx)$ in $\{t_0\}\times\mathcal{T}_h$. So we can get
\begin{equation*}
\begin{aligned}
B_{h\tau}(u,v^\tau_h)=
&- \int_{\Sigma}\frac{\partial u}{\partial t} \frac{\partial_\tau v^\tau_h}{\partial_\tau t} \dx \dt + \int_{\Sigma}\gamma \frac{\partial u}{\partial t} v^\tau_h \dx \dt +\int_{\Sigma}\eta \nabla \left(\frac{\partial u}{\partial t}\right)\cdot\nabla_h v^\tau_h \dx \dt + \int_{\Sigma}\xi^2 \nabla u \cdot\nabla_h v^\tau_h \dx \dt\\ 
&- \int_{\mathcal{T}_h} u_0 \frac{\partial_\tau v^\tau_h}{\partial_\tau t}(0,\bx)\dx +\sum^{N_t}_{i=1}\int_{\mathcal{T}_h}\llbracket v^\tau_h(t_i,\bx) \rrbracket \cdot \left\{\frac{\partial u}{\partial t}(t_i,\bx)\right\}\dx \\
&- \int_{\mathcal{T}_h}\beta_1 u_0 v^\tau_h(0,\bx)\dx -\int_{\mathcal{D}_\tau \times\mathcal{E}^{}_h} \eta \llbracket v^\tau_h\rrbracket\cdot \left\{ \nabla \left(\frac{\partial u}{\partial t}\right) \right\} \ds\dt \\
&- \int_{ \mathcal{D}_\tau\times{\cal E}_h^{D}}\xi^2 g_D \boldsymbol{n}\cdot \nabla_{h} v^\tau_{h} \ds\dt- \int_{\mathcal{D}_\tau \times \left(\mathcal{E}^{i}_h\cup\mathcal{E}^{D}_h\right)} \xi^2 \llbracket v^\tau_h\rrbracket\cdot \{ \nabla u \} \ds\dt \\
&+ \int_{\mathcal{D}_\tau \times\mathcal{E}^{R}_h} \xi^2 (g_R - \nabla u \cdot \boldsymbol{n}) v^\tau_h\ds\dt + \int_{ \mathcal{D}_\tau\times{\cal E}_h^{D}}\xi^2\beta_2 g_D v^\tau_{h} \ds\dt.
\end{aligned}
\end{equation*}
Then, by integration by parts \eqref{ibps}, identities \eqref{iden_dg} and \eqref{iden_tdg}, we have
\begin{equation*}
\begin{aligned}
B_{h\tau}(u,v^\tau_h)=& \int_{\Sigma}\frac{\partial^2 u}{\partial t^2} v^\tau_h \dx \dt  - \int_{\mathcal{T}_h} w_0 v^\tau_h(0,\bx)\dx + \int_{\Sigma}\gamma \frac{\partial u}{\partial t} v^\tau_h \dx \dt\\
&-\int_{\Sigma}\eta \Delta \left(\frac{\partial u}{\partial t}\right) v^\tau_h \dx \dt - \int_{\Sigma}\xi^2 \Delta u v^\tau_h \dx \dt + \int_{ \mathcal{D}_\tau\times{\cal E}_h^{N}} \xi^2 g_N v^\tau_h \ds\dt\\ 
& - \int_{\mathcal{T}_h} u_0 \frac{\partial_\tau v^\tau_h}{\partial_\tau t}(0,\bx)\dx - \int_{\mathcal{T}_h}\beta_1 u_0 v^\tau_h(0,\bx)\dx - \int_{ \mathcal{D}_\tau\times{\cal E}_h^{D}}\xi^2 g_D \boldsymbol{n}\cdot \nabla_{h} v^\tau_{h} \ds\dt \\
& + \int_{\mathcal{D}_\tau \times\mathcal{E}^{R}_h} \xi^2 g_R v^\tau_h\ds\dt + \int_{ \mathcal{D}_\tau\times{\cal E}_h^{D}}\xi^2\beta_2 g_D v^\tau_{h} \ds\dt\\
=& \int_{\Sigma}\left( \frac{\partial^2 u}{\partial t^2} + \gamma \frac{\partial u}{\partial t} - \eta\Delta\left(\frac{\partial u}{\partial t}\right) -\xi^2 \Delta u \right) v^\tau_h \dx \dt  - \int_{\mathcal{T}_h} u_0 \frac{\partial_\tau v^\tau_h}{\partial_\tau t}(0,\bx)\dx \\
& - \int_{\mathcal{T}_h} w_0 v^\tau_h(0,\bx)\dx - \int_{\mathcal{T}_h}\beta_1 u_0 v^\tau_h(0,\bx)\dx - \int_{ \mathcal{D}_\tau\times{\cal E}_h^{D}}\xi^2 g_D \boldsymbol{n}\cdot \nabla_{h} v^\tau_{h} \ds\dt \\
&+ \int_{ \mathcal{D}_\tau\times{\cal E}_h^{N}} \xi^2 g_N v^\tau_h \ds\dt + \int_{\mathcal{D}_\tau \times\mathcal{E}^{R}_h} \xi^2 g_R v^\tau_h\ds\dt + \int_{ \mathcal{D}_\tau\times{\cal E}_h^{D}}\xi^2\beta_2 g_D v^\tau_{h} \ds\dt \\
=& l(v^\tau_h),
\end{aligned}
\end{equation*}
which completes the proof.
\end{proof} 

\subsection{LRNN-$C^0$DG method}\label{lrnnc0dg_dvwe}

In this subsection, we present the LRNN-$C^0$DG method for solving the DVWE. 

In the LRNN-DG method \eqref{timec0for}, the penalty terms enforce and control the Dirichlet boundary conditions, initial conditions, and continuity of numerical solution on interior faces $f$. To remove these penalty terms, we propose the LRNN-$C^0$DG method by imposing that the solution satisfies the Dirichlet boundary conditions, initial conditions, and internal continuity at some collocation points chosen on different faces.

Let us consider the Dirichlet boundary condition Eq. \eqref{strongboundcd}. We select $N_D$ collocation points $P^D_h = \{(t^D,\bx^D) \in \mathcal{D}_\tau\times\mathcal{E}^D_h \}$ and enforce the solution to satisfy the Dirichlet boundary condition on $P^D_h$, that is,
\begin{equation}\label{c0dg_bcd}
u^\tau_h(t^D,\bx^D) = g_D(t^D,\bx^D)\quad \forall(t^D,\bx^D)\in P^D_h.
\end{equation}
Since Neumann and Robin boundary conditions are embedded in the DG scheme, we do not need to choose collocation points on Neumann and Robin boundaries. For the initial condition Eq.  \eqref{stronginit1}, we choose $N_I$ collocation points 
$P^I_\tau = \{(t_0,\bx^I) \in {t_0}\times\mathcal{T}_h \}$, such that
\begin{equation}\label{c0dg_bci}
u^\tau_h(t_0,\bx^I) = u_0(\bx^I)\quad \forall(t_0,\bx^I)\in P^I_\tau.
\end{equation}
Furthermore, we need to ensure that the solution has $C^0$ continuity on interior faces. We take $N_{si}$ collocation points $P^i_{ h} = \{(t^i,\bx^i) \in (\mathcal{D}_\tau\times\mathcal{E}_h^{i}) \}$ and $N_{ti}$ collocation points $P^i_{\tau} = \{(t^i,\bx^i) \in (\mathcal{P}_\tau^{i}\times \mathcal{T}_h) \}$, such that
\begin{equation}\label{c0dg_bcc}
\begin{aligned}
\llbracket u^\tau_h\rrbracket &= 0\quad \forall(t^i,\bx^i)\in P^i_{ h},\\
[ u^\tau_h(t^i,\bx^i)] &= 0\quad \forall(t^i,\bx^i)\in P^i_{\tau}.
\end{aligned}
\end{equation}
We get a system of equations from Eqs. \eqref{c0dg_bcd}, \eqref{c0dg_bci} and \eqref{c0dg_bcc},
\begin{align}\label{c0dg_eqstrong}
\mathbb{A}_2 U=L_2.
\end{align}

The LRNN-$C^0$DG scheme for solving the DVWE \eqref{dvwe_strong} is: Find $u^\tau_h\in V^\tau_h$ such that
\begin{equation}\label{c0for}
\begin{aligned}
B^0_{h\tau}(u^\tau_h,v^\tau_h) = l^0(v^\tau_h) \quad &\forall v^\tau_h\in V^\tau_h,\\
\llbracket u^\tau_h\rrbracket = 0\quad &\forall(t^i,\bx^i)\in P^i_h,\\
[ u^\tau_h(t^i,\bx^i)] = 0\quad &\forall(t^i,\bx^i)\in P^i_\tau,\\
u^\tau_h(t_0,\bx^I) = u_0(\bx^I)\quad &\forall(t_0,\bx^I)\in P^I_\tau,\\
u^\tau_h(t^D,\bx^D) = g_D(\bx^D)\quad &\forall(t^D,\bx^D)\in P^D_h,
\end{aligned}
\end{equation}
where
\begin{align}\label{lrnnc0dg_B}
B^0_{h\tau}(u^\tau_h,v^\tau_h)=
&- \int_{\Sigma}\frac{\partial_\tau u^\tau_h}{\partial_\tau t} \frac{\partial_\tau v^\tau_h}{\partial_\tau t} \dx \dt + \int_{\Sigma}\gamma \frac{\partial_\tau u^\tau_h}{\partial_\tau t} v^\tau_h \dx \dt +\int_{\Sigma}\eta \nabla_h \left(\frac{\partial_\tau u^\tau_h}{\partial_\tau t}\right)\cdot\nabla_h v^\tau_h \dx \dt \nonumber\\ 
& + \int_{\Sigma}\xi^2 \nabla_h u^\tau_h \cdot\nabla_h v^\tau_h \dx \dt + \sum^{N_t}_{i=1}\int_{\mathcal{T}_h}[ v^\tau_h(t_i,\bx) ] \left\{\frac{\partial_\tau u^\tau_h}{\partial_\tau t}(t_i,\bx)\right\}\dx \nonumber\\ 
& -\int_{\mathcal{D}_\tau \times\mathcal{E}^{}_h} \eta \llbracket v^\tau_h\rrbracket\cdot \left\{ \nabla_{h} \left(\frac{\partial_\tau u^\tau_h}{\partial_\tau t}\right) \right\} \ds\dt - \int_{\mathcal{D}_\tau \times \left(\mathcal{E}^{i}_h\cup\mathcal{E}^{D}_h\right)} \xi^2 \llbracket v^\tau_h\rrbracket\cdot \{ \nabla_{h}u^\tau_h \} \ds\dt \nonumber\\  
&+ \int_{\mathcal{D}_\tau \times\mathcal{E}^{R}_h} \xi^2 u^\tau_h v^\tau_h\ds\dt ,
\end{align}
\begin{equation}\label{lrnnc0dg_l}
l^0(v^\tau_h)= 
\int_{\Sigma} f v^\tau_h \dx \dt  - \int_{\mathcal{T}_h} w_0 v^\tau_h(0,\bx)\dx
+ \int_{ \mathcal{D}_\tau\times{\cal E}_h^{N}} \xi^2 g_N v^\tau_h \ds\dt
+ \int_{ \mathcal{D}_\tau\times{\cal E}_h^{R}}\xi^2 g_R  v^\tau_{h} \ds\dt .
\end{equation}
From Eqs. \eqref{lrnnc0dg_B} and \eqref{lrnnc0dg_l}, we get the global stiffness matrix $\mathbb{A}_1$ and the right-hand side $L_1$ of the LRNN-$C^0$DG method. Then, we use the least-squares method to solve the system of equations
\begin{equation}
\begin{bmatrix} \mathbb{A}_1 \\ \mathbb{A}_2 \end{bmatrix} U_0 = \begin{bmatrix} L_1 \\ L_2 \end{bmatrix},
\end{equation}
where $U_0$ is the unknown parameters of the output layers in the LRNN-$C^0$DG method.

\begin{remark}
In \cite{Sun2022lrnndg}, a numerical experiment demonstrates that the system of equations \eqref{c0dg_eqstrong} ensures that the solution satisfies Dirichlet boundary conditions, initial condition, and interior continuity. Specifically, the jump $\llbracket u^\tau_h\rrbracket \approx 0$ on interior faces, $u^\tau_h \approx g_D$ on Dirichlet edges, and $u^\tau_h \approx u_0$ on initial edges in the $L^2$ norm when there are sufficient collocation points. 
\end{remark}

\subsection{LRNN-$C^1$DG method}\label{lrnnc1dg_dvwe}

In this subsection, we introduce the LRNN-$C^1$DG method for solving the DVWE. 

We multiply both sides of Eq. \eqref{strongeq} by test functions $v^\tau_h\in V^\tau_h$ and integrate over subdomain $\sigma = I_i\times K$. Then we apply integration by parts and get the following weak formulation:
\begin{align}\label{lrnn_c1dg_weak}
B^1_{h\tau}(u^\tau_h,v^\tau_h) = \int_{\sigma} f v^\tau_h \dx \dt \quad \forall v^\tau_h\in V^\tau_h,\quad\forall \sigma \in \mathcal{D}_\tau\times\mathcal{T}_h,
\end{align}
where
\begin{align*}
B^1_{h\tau}(u^\tau_h,v^\tau_h)=
&- \int_{\sigma}\frac{\partial u^\tau_h}{\partial t} \frac{\partial v^\tau_h}{\partial t} \dx \dt + \int_{\sigma}\gamma \frac{\partial u^\tau_h}{\partial t} v^\tau_h \dx \dt +\int_{\sigma}\eta \nabla \left(\frac{\partial u^\tau_h}{\partial t}\right)\cdot\nabla v^\tau_h \dx \dt\\ 
&+ \int_{\sigma}\xi^2 \nabla u^\tau_h \cdot\nabla v^\tau_h \dx \dt + \int_{{K}}\frac{\partial u^\tau_h}{\partial t}(t,\bx)v^\tau_h(t,\bx)|^{t_i}_{t_{i-1}}\dx\\
&-\int_{I_i\times\partial K}\eta \nabla \left(\frac{\partial u^\tau_h}{\partial t}\right)\cdot\boldsymbol{n}_K v^\tau_h \ds \dt -\int_{I_i\times\partial K}\xi^2 \nabla u^\tau_h \cdot\boldsymbol{n}_K v^\tau_h \ds \dt.
\end{align*}
The initial condition \eqref{stronginit2}, Neumann boundary conditions \eqref{strongboundcn}, and Robin boundary conditions \eqref{strongboundcr} are incorporated into the weak formulation \eqref{lrnn_c1dg_weak} naturally.

Similar to the LRNN-$C^0$DG method, we need to impose some conditions to ensure the continuity of numerical solution across faces (edges) between subdomains. Moreover, we choose $N_{si}$ collocation points $P^i_{ h} = \{(t^i,\bx^i) \in \mathcal{D}_\tau\times\mathcal{E}_h^{i} \}$ and $N_{ti}$ collocation points $P^i_{\tau} = \{(t^i,\bx^i) \in \mathcal{P}_\tau^{i}\times \mathcal{T}_h \}$ on interior faces and require that the solution has $C^1$ continuity on these faces, 
\begin{subequations}\label{c1dg_bcc}
\begin{align}
\llbracket u^\tau_h\rrbracket = 0\quad &\forall(t^i,\bx^i)\in P^i_{ h},\label{c1dg_sc0}\\
[ u^\tau_h(t^i,\bx^i)] = 0\quad &\forall(t^i,\bx^i)\in P^i_{\tau},\label{c1dg_tc0}\\
\left[ \nabla_hu^\tau_h\right] = 0\quad &\forall(t^i,\bx^i)\in P^i_{ h},\label{c1dg_hc1}\\
\left[ \frac{\partial_\tau u^\tau_h}{\partial_\tau t}(t^i,\bx^i)\right] = 0\quad &\forall(t^i,\bx^i)\in P^i_{\tau}.\label{c1dg_tc1}
\end{align}
\end{subequations}

Considering the initial condition \eqref{stronginit1} and Dirichlet boundary condition \eqref{strongboundcd}, we select $N_I$ collocation points $P^I_\tau = \{(t_0,\bx^I) \in {t_0}\times\mathcal{T}_h \}$ and $N_D$ collocation points $P^D_h = \{(t^D,\bx^D) \in \mathcal{D}_\tau\times\mathcal{E}^D_h \}$ such that the solution $u^\tau_h$ satisfies
\begin{equation}\label{c1dg_bci}
u^\tau_h(t_0,\bx^I) = u_0(\bx^I)\quad \forall(t_0,\bx^I)\in P^I_\tau,
\end{equation}
\begin{equation}\label{c1dg_bcd}
u^\tau_h(t^D,\bx^D) = g_D(t^D,\bx^D)\quad \forall(t^D,\bx^D)\in P^D_h.
\end{equation}
Then the LRNN-$C^1$DG method for solving DVWE is: Find $u^\tau_h \in V^\tau_h$ that satisfies Eqs. \eqref{lrnn_c1dg_weak}--\eqref{c1dg_bcd}.

From Eq. \eqref{lrnn_c1dg_weak}, we obtain a stiffness matrix $\mathbb{A}_3$ and a right-hand side $L_3$. From Eqs. \eqref{c1dg_bcc}--\eqref{c1dg_bcd}, we obtain another matrix $\mathbb{A}_4$ and a right-hand side $L_4$. Finally, we solve the following system of equations by least-squares methods
\begin{equation}
\begin{bmatrix} \mathbb{A}_3 \\ \mathbb{A}_4 \end{bmatrix} U_1 = \begin{bmatrix} L_3 \\ L_4 \end{bmatrix},
\end{equation}
where $U_1$ is the unknown parameters of output layers of local RNNs.

\begin{remark}
LRNN-DG methods use a space-time approach that treats the temporal and spatial variables in a consistent manner. This has some advantages, such as high stability and avoiding the error accumulation caused by time iterations. 
\end{remark}

\section{Numerical examples}
\label{numericalex}

In this section, we present some numerical experiments to demonstrate the performance of the proposed methods for DVWE. 

We use the Pytorch library in Python to construct local neural networks on different subdomains. Each local neural network has one hidden layer. The parameters of the hidden layer are randomly generated from the uniform distribution $U(-r,r)$ and fixed throughout the training process, where $r$ is a positive constant. The influence of the parameter $r$ is discussed in \cite{Dong2021locELMW}. We apply the Gauss quadrature formula to evaluate all the integrals in the proposed methods. For solving the parameters of the output layers, we use the least-squares method implemented by the wrapper function in the Scipy package. We denote by ${\rm DoF}_\sigma$ the number of degrees of freedom in each subdomain $\sigma$. We define the global relative $L^2$ error as 
$$E^{L^2}_r (u) = \left(\int_\Sigma (u-u^*)^2 \dx\dt\right)^{\frac{1}{2}}\Big / \left(\int_\Sigma (u^*)^2 \dx\dt\right)^{\frac{1}{2}},$$ 
and the global relative $H^1$ error as 
$$E^{H^1}_r(u) = \left(\int_\Sigma (u_t-u_t^*)^2 + |\nabla (u - u^*)|^2 \dx\dt\right)^{\frac{1}{2}}
\Big / \left(\int_\Sigma (u_t^*)^2 + |\nabla u^*|^2 \dx\dt\right)^{\frac{1}{2}},$$ 
where $u$ is the numerical solution and $u^*$ is the real solution.

\begin{example}[1-D case]
\label{1ddvwe}
We consider a one-dimensional diffusive-viscous wave equation on the domain $\Omega = (0,1)$ and the time interval $I=(0,1)$. 
The Dirichlet boundary is $\Gamma_D = \partial\Omega$. We choose the coefficients for water-saturated rock, i.e., $\gamma=90$, $\eta=2\times 10^{-7}$, and $\xi=1.47$. The exact solution is
\begin{equation}
u(t,x) = e^{cos(27x^2 + 27t^2 + 8\pi - 24)}\frac{(x^2 + 1)}{2}\frac{(t^2 + 1)}{2},\nonumber
\end{equation}
which determines the Dirichlet boundary condition $g_D$, the initial conditions $u_0$ and $w_0$, and the source term $f$.
\end{example}

\begin{table}[]
\centering
\begin{tabular}{|c|cc|cc|cc|}
\hline
{$\tau$}, {$h$}             & \multicolumn{2}{c|}{$1/6$, $1/6$}                                                    & \multicolumn{2}{c|}{$1/8$, $1/8$}                                                    & \multicolumn{2}{c|}{$1/10$, $1/10$}                                                   \\ \hline
\diagbox[width=5em,trim=l]{${\rm DoF}_\sigma$}{Norm} & \multicolumn{1}{c|}{ $L^{2}$} &  $H^{1}$ & \multicolumn{1}{c|}{ $L^{2}$} & \multicolumn{1}{c|}{ $H^{1}$} & \multicolumn{1}{c|}{ $L^{2}$} & \multicolumn{1}{c|}{ $H^{1}$} \\ \hline
80                         & \multicolumn{1}{l|}{7.85E-01}         & 5.10E+00                              & \multicolumn{1}{l|}{6.48E-02}         & 4.51E-01                              & \multicolumn{1}{l|}{8.52E-03}         & 7.80E-02                              \\ \hline
160                        & \multicolumn{1}{l|}{1.14E-02}         & 7.81E-02                              & \multicolumn{1}{l|}{2.36E-03}         & 2.17E-02                              & \multicolumn{1}{l|}{3.08E-04}         & 3.88E-03                              \\ \hline
320                        & \multicolumn{1}{l|}{4.40E-03}         & 3.45E-02                              & \multicolumn{1}{l|}{5.00E-04}         & 5.53E-03                              & \multicolumn{1}{l|}{6.57E-05}         & 9.19E-04                              \\ \hline
\end{tabular}
\caption{Global relative errors of the space-time LRNN-DG method in Example \ref{1ddvwe}}
\label{table1dlrnndgerr}
\end{table}
\begin{table}[]
\centering
\begin{tabular}{|c|cc|cc|cc|}
\hline
{$\tau$}, {$h$}             & \multicolumn{2}{c|}{$1/6$, $1/6$}                                                    & \multicolumn{2}{c|}{$1/8$, $1/8$}                                                    & \multicolumn{2}{c|}{$1/10$, $1/10$}                                                   \\ \hline
\diagbox[width=5em,trim=l]{${\rm DoF}_\sigma$}{Norm} & \multicolumn{1}{c|}{ $L^{2}$} &  $H^{1}$ & \multicolumn{1}{c|}{ $L^{2}$} & \multicolumn{1}{c|}{ $H^{1}$} & \multicolumn{1}{c|}{ $L^{2}$} & \multicolumn{1}{c|}{ $H^{1}$} \\ \hline
80                         & \multicolumn{1}{l|}{2.10E-01}         & 9.72E-01                              & \multicolumn{1}{l|}{5.30E-02}         & 3.21E-01                              & \multicolumn{1}{l|}{1.29E-02}         & 9.55E-02                              \\ \hline
160                        & \multicolumn{1}{l|}{1.29E-02}         & 8.31E-02                              & \multicolumn{1}{l|}{2.07E-03}         & 1.89E-02                              & \multicolumn{1}{l|}{2.58E-04}         & 2.79E-03                              \\ \hline
320                        & \multicolumn{1}{l|}{5.19E-03}         & 3.50E-02                              & \multicolumn{1}{l|}{4.88E-04}         & 5.10E-03                              & \multicolumn{1}{l|}{8.27E-05}         & 9.50E-04                              \\ \hline
\end{tabular}
\caption{Global relative errors of the space-time LRNN-$C^0$DG method in Example \ref{1ddvwe}}
\label{table1dlrnnc0dgerr}
\end{table}
\begin{table}[]
\centering
\begin{tabular}{|c|cc|cc|cc|}
\hline
{$\tau$}, {$h$}             & \multicolumn{2}{c|}{$1/6$, $1/6$}                                                    & \multicolumn{2}{c|}{$1/8$, $1/8$}                                                    & \multicolumn{2}{c|}{$1/10$, $1/10$}                                                   \\ \hline
\diagbox[width=5em,trim=l]{${\rm DoF}_\sigma$}{Norm} & \multicolumn{1}{c|}{ $L^{2}$} &  $H^{1}$ & \multicolumn{1}{c|}{ $L^{2}$} & \multicolumn{1}{c|}{ $H^{1}$} & \multicolumn{1}{c|}{ $L^{2}$} & \multicolumn{1}{c|}{ $H^{1}$} \\ \hline
80                         & \multicolumn{1}{l|}{1.30E-01}         & 5.17E-01                              & \multicolumn{1}{l|}{2.85E-02}         & 1.51E-01                              & \multicolumn{1}{l|}{1.29E-02}         & 8.03E-02                              \\ \hline
160                        & \multicolumn{1}{l|}{9.29E-03}         & 6.23E-02                              & \multicolumn{1}{l|}{2.95E-03}         & 2.28E-02                              & \multicolumn{1}{l|}{4.86E-04}         & 4.79E-03                              \\ \hline
320                        & \multicolumn{1}{l|}{4.11E-03}         & 2.79E-02                              & \multicolumn{1}{l|}{3.62E-04}         & 3.65E-03                              & \multicolumn{1}{l|}{7.53E-05}         & 8.00E-04                              \\ \hline
\end{tabular}
\caption{Global relative errors of the space-time LRNN-$C^1$DG method in Example \ref{1ddvwe}}
\label{table1dlrnnc1dgerr}
\end{table}

\begin{figure}[htb] 
  \centering  
   
  \begin{subfigure}{0.4\textwidth}
      \centering      
      \includegraphics[width=0.9\textwidth]{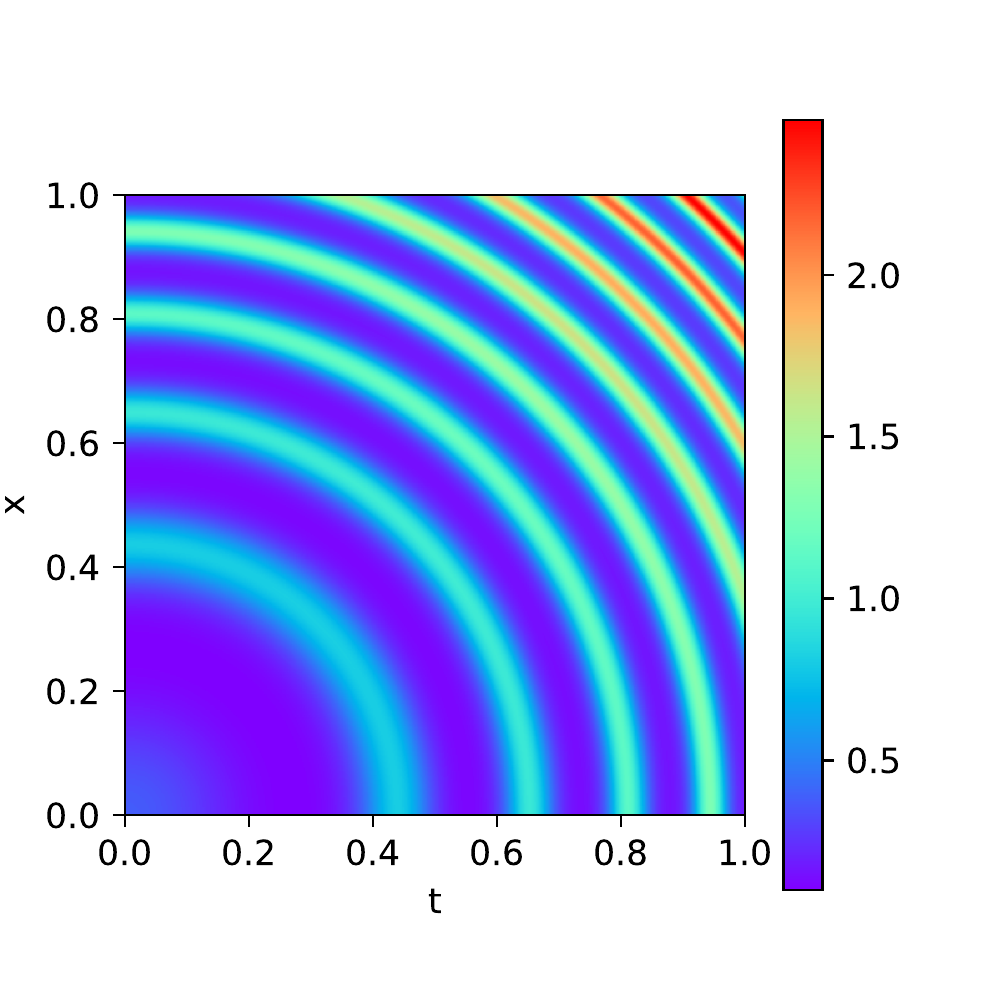}  
      \caption{Exact solution}
      \label{fig:a}
  \end{subfigure}\quad
  \begin{subfigure}{0.4\textwidth}
      \centering      
      \includegraphics[width=0.9\textwidth]{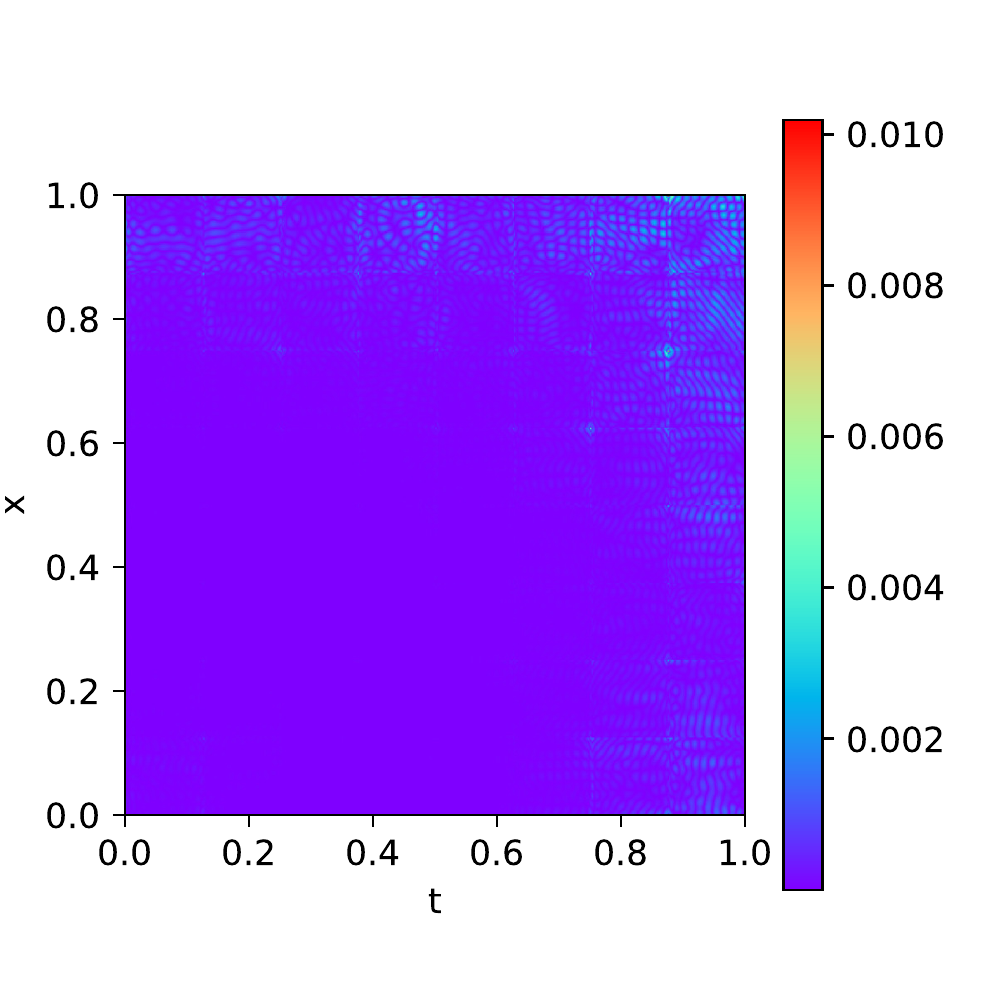}  
      \caption{Absolute errors of LRNN-DG method}
      \label{fig:b}
  \end{subfigure}\\
  \begin{subfigure}{0.4\textwidth}
      \centering    
      \includegraphics[width=0.9\textwidth]{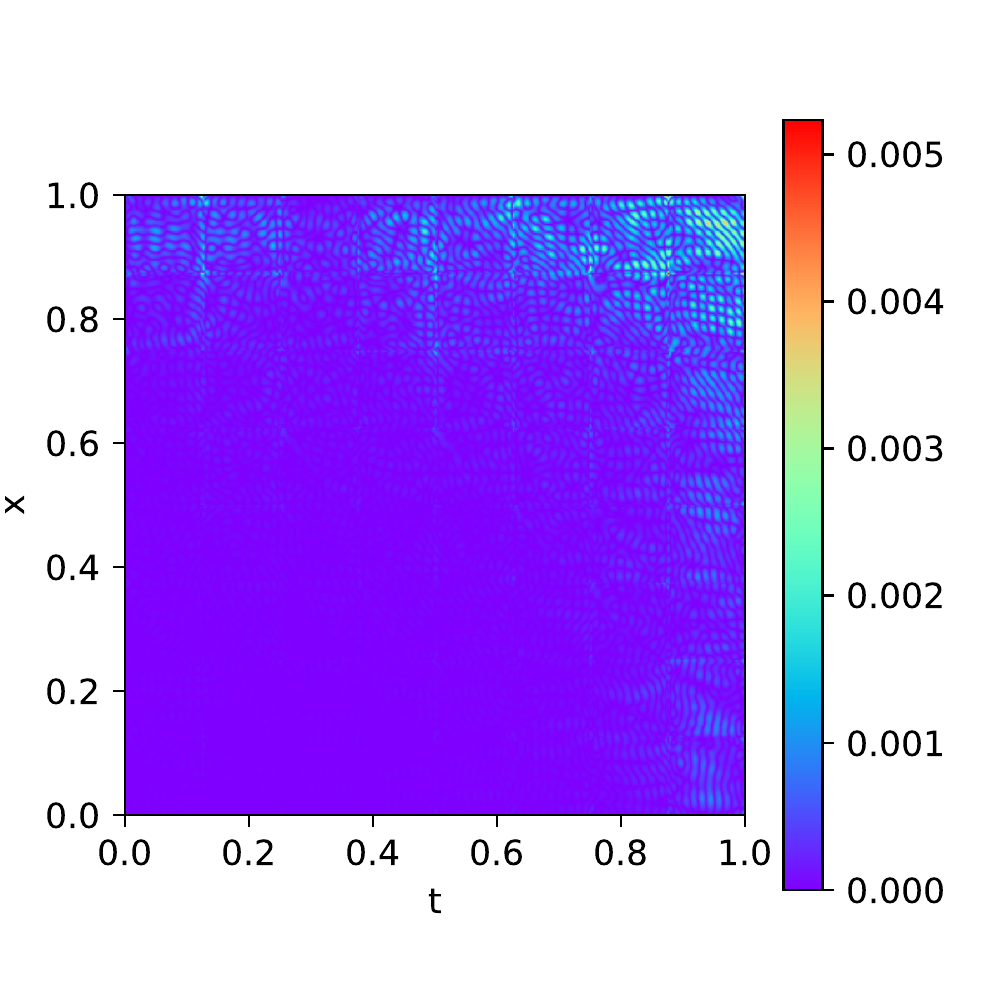}  
		 \caption{Absolute errors of LRNN-$C^0$DG method}
      \label{fig:c}
    \end{subfigure}  \quad
   \begin{subfigure}{0.4\textwidth}
      \centering    
      \includegraphics[width=0.9\textwidth]{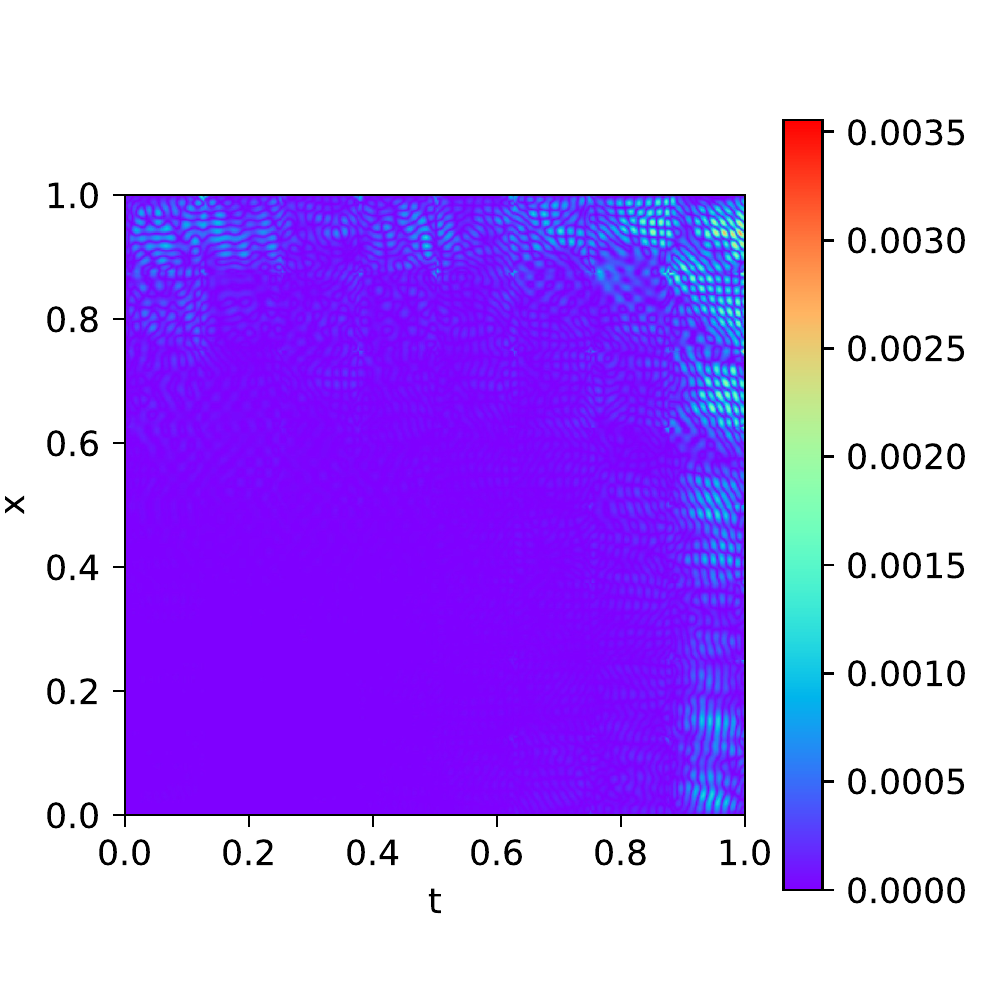}  
		 \caption{Absolute errors of LRNN-$C^1$DG method}
      \label{fig:d}
    \end{subfigure}  
  \caption{The exact solution and absolute errors computed by proposed methods in Example \ref{1ddvwe}.}  
  \label{figure2ddgerr}
\end{figure}

The global relative $L^2$ errors and $H^1$ errors of the LRNN-DG, LRNN-$C^0$DG, and LRNN-$C^1$DG methods are shown in Tables \ref{table1dlrnndgerr}, \ref{table1dlrnnc0dgerr} and \ref{table1dlrnnc1dgerr} for different values of the mesh size $h$, the time sub-interval $\tau$ and the number of degrees of freedom ${\rm DoF}_\sigma$ in each domain. 
We use 15 quadrature points in each dimension for all three methods.
For the LRNN-DG method, we set $r = 1.38$, $\beta_1 = \beta_2 = 7$. For the LRNN-$C^0$DG and LRNN-$C^1$DG methods, we choose $r = 1.34$, 13 collocation points on every edge. We find that the numerical solutions become more accurate as $h$ and $\tau$ decrease and ${\rm DoF}\sigma$ increases. The three methods are all accurate and have similar performance.

The oscillatory exact solution in Example \ref{1ddvwe} and the absolute errors of the proposed methods are displayed in Figure \ref{figure2ddgerr}. We use the same settings as in the previous tables, with the mesh size $h = 1/8$ and the number of degrees of freedom ${\rm DoF}_\sigma=320$. The proposed methods perform well in this example, and they do not need time discretization, so they avoid the error accumulation over time that happens in the traditional finite difference methods.

\begin{example}[2-D case]
\label{2ddvwe}

We consider a two-dimensional diffusive-viscous wave equation on the domain $\Omega =(0,1)^2$ and the time interval $I = (0,0.5)$ with Dirichlet boundary conditions. The parameters $\gamma=1$, $\eta=0.01$ and $\xi=0.1$, which are consistent with the numerical test in \cite{Ling2023LDG}. The exact solution is
\begin{equation}
u(t,x,y) = e^{-t}sin(2\pi x)sin(2\pi y).\nonumber
\end{equation}
The Dirichlet boundary condition $g_D$, the initial conditions $u_0$ and $w_0$ and the source term $f$ can be derived from the exact solution.
\end{example}

\begin{table}[]
\centering
\begin{tabular}{|c|cc|cc|cc|}
\hline
{$\tau$}, {$h$}             & \multicolumn{2}{c|}{$1/4$, $1/2$}                         & \multicolumn{2}{c|}{$1/6$, $1/3$}                                              & \multicolumn{2}{c|}{$1/8$, $1/4$}                                              \\ \hline
\diagbox[width=5em,trim=l]{${\rm DoF}_\sigma$}{Norm} & \multicolumn{1}{c|}{ $L^{2}$} &  $H^{1}$ & \multicolumn{1}{c|}{ $L^{2}$} & \multicolumn{1}{c|}{ $H^{1}$} & \multicolumn{1}{c|}{ $L^{2}$} & \multicolumn{1}{c|}{ $H^{1}$} \\ \hline
40                         & \multicolumn{1}{c|}{1.75E+00}         & 3.00E+00         & \multicolumn{1}{l|}{6.61E-01}         & 1.38E+00                              & \multicolumn{1}{l|}{3.36E+00}         & 9.64E+00                              \\ \hline 
80                         & \multicolumn{1}{c|}{4.46E-01}         & 1.06E+00         & \multicolumn{1}{l|}{5.66E-01}         & 1.84E+00                              & \multicolumn{1}{l|}{1.16E-01}         & 4.93E-01                              \\ \hline
160                        & \multicolumn{1}{c|}{1.17E-02}         & 3.73E-02         & \multicolumn{1}{l|}{5.48E-03}         & 2.49E-02                              & \multicolumn{1}{l|}{3.51E-03}         & 1.89E-02                              \\ \hline
320                        & \multicolumn{1}{c|}{2.29E-04}         & 1.09E-03         & \multicolumn{1}{l|}{2.73E-05}         & 1.95E-04                              & \multicolumn{1}{l|}{1.58E-05}         & 1.40E-04                              \\ \hline
640                        & \multicolumn{1}{c|}{3.77E-05}         & 2.11E-04         & \multicolumn{1}{l|}{7.15E-06}         & 6.50E-05                              & \multicolumn{1}{l|}{2.94E-06}         & 3.50E-05                              \\ \hline
\end{tabular}
\caption{Global relative errors of the space-time LRNN-DG method in Example \ref{2ddvwe}}
\label{table2dlrnndgerr}
\end{table}

\begin{table}[]
\centering
\begin{tabular}{|c|cc|cc|cc|}
\hline
{$\tau$}, {$h$}             & \multicolumn{2}{c|}{$1/4$, $1/2$}                         & \multicolumn{2}{c|}{$1/6$, $1/3$}                         & \multicolumn{2}{c|}{$1/8$, $1/4$}                         \\ \hline
\diagbox[width=5em,trim=l]{${\rm DoF}_\sigma$}{Norm} & \multicolumn{1}{c|}{ $L^{2}$} &  $H^{1}$ & \multicolumn{1}{c|}{ $L^{2}$} & \multicolumn{1}{c|}{ $H^{1}$} & \multicolumn{1}{c|}{ $L^{2}$} & \multicolumn{1}{c|}{ $H^{1}$} \\ \hline
40                         & \multicolumn{1}{c|}{3.00E-01}         & 5.62E-01         & \multicolumn{1}{c|}{5.50E-02}         & 1.36E-01         & \multicolumn{1}{c|}{6.86E-02}         & 1.89E-01         \\ \hline
80                         & \multicolumn{1}{c|}{7.13E-02}         & 1.64E-01         & \multicolumn{1}{c|}{3.03E-02}         & 1.02E-01         & \multicolumn{1}{c|}{8.50E-03}         & 3.49E-02         \\ \hline
160                        & \multicolumn{1}{c|}{9.45E-03}         & 3.17E-02         & \multicolumn{1}{c|}{1.85E-03}         & 8.26E-03         & \multicolumn{1}{c|}{8.51E-04}         & 4.54E-03         \\ \hline
320                        & \multicolumn{1}{c|}{5.80E-05}         & 2.83E-04         & \multicolumn{1}{c|}{3.05E-05}         & 1.58E-04         & \multicolumn{1}{c|}{5.76E-06}         & 4.73E-05         \\ \hline
640                        & \multicolumn{1}{c|}{2.18E-05}         & 1.07E-04         & \multicolumn{1}{c|}{7.24E-06}         & 5.12E-05         & \multicolumn{1}{c|}{2.99E-06}         & 2.56E-05         \\ \hline
\end{tabular}
\caption{Global relative errors of the space-time LRNN-$C^0$DG method in Example \ref{2ddvwe}}
\label{table2dlrnnc0dgerr}
\end{table}

\begin{table}[]
\centering
\begin{tabular}{|c|cc|cc|cc|}
\hline
{$\tau$}, {$h$}             & \multicolumn{2}{c|}{$1/4$, $1/2$}                         & \multicolumn{2}{c|}{$1/6$, $1/3$}                         & \multicolumn{2}{c|}{$1/8$, $1/4$}                         \\ \hline
\diagbox[width=5em,trim=l]{${\rm DoF}_\sigma$}{Norm} & \multicolumn{1}{c|}{ $L^{2}$} &  $H^{1}$ & \multicolumn{1}{c|}{ $L^{2}$} & \multicolumn{1}{c|}{ $H^{1}$} & \multicolumn{1}{c|}{ $L^{2}$} & \multicolumn{1}{c|}{ $H^{1}$} \\ \hline
40                         & \multicolumn{1}{c|}{4.51E-01}         & 5.36E-01         & \multicolumn{1}{c|}{1.69E-01}         & 2.05E-01         & \multicolumn{1}{c|}{2.00E-01}         & 2.50E-01         \\ \hline
80                         & \multicolumn{1}{c|}{3.81E-02}         & 7.74E-02         & \multicolumn{1}{c|}{2.97E-02}         & 6.39E-02         & \multicolumn{1}{c|}{1.22E-02}         & 3.01E-02         \\ \hline
160                        & \multicolumn{1}{c|}{1.23E-02}         & 2.87E-02         & \multicolumn{1}{c|}{1.33E-03}         & 4.34E-03         & \multicolumn{1}{c|}{7.08E-04}         & 2.73E-03         \\ \hline
320                        & \multicolumn{1}{c|}{6.82E-05}         & 1.84E-04         & \multicolumn{1}{c|}{9.92E-06}         & 3.52E-05         & \multicolumn{1}{c|}{3.91E-06}         & 1.53E-05         \\ \hline
640                        & \multicolumn{1}{c|}{4.11E-05}         & 1.08E-04         & \multicolumn{1}{c|}{5.33E-06}         & 2.02E-05         & \multicolumn{1}{c|}{2.81E-06}         & 1.03E-05         \\ \hline
\end{tabular}
\caption{Global relative errors of the space-time LRNN-$C^1$DG method in Example \ref{2ddvwe}}
\label{table2dlrnnc1dgerr}
\end{table}

\begin{table}[]
\centering
\begin{tabular}{|c|cc|cc|cc|}
\hline
{$\tau$}, {$h$}             & \multicolumn{2}{c|}{$1/4$, $1/4$}                         & \multicolumn{2}{c|}{$1/4$, $1/5$}                         & \multicolumn{2}{c|}{$1/4$, $1/6$}                         \\ \hline
\diagbox[width=5em,trim=l]{${\rm DoF}_\sigma$}{Norm} & \multicolumn{1}{c|}{ $L^{2}$} &  $H^{1}$ & \multicolumn{1}{c|}{ $L^{2}$} & \multicolumn{1}{c|}{ $H^{1}$} & \multicolumn{1}{c|}{ $L^{2}$} & \multicolumn{1}{c|}{ $H^{1}$} \\ \hline
40                         & \multicolumn{1}{c|}{1.64E+00}         & 5.36E+00         & \multicolumn{1}{c|}{1.40E-01}         & 6.13E-01         & \multicolumn{1}{c|}{2.10E-01}         & 9.24E-01         \\ \hline
80                         & \multicolumn{1}{c|}{5.59E-02}         & 2.59E-01         & \multicolumn{1}{c|}{5.29E-01}         & 2.66E+00         & \multicolumn{1}{c|}{5.90E-02}         & 3.86E-01         \\ \hline
160                        & \multicolumn{1}{c|}{2.42E-03}         & 1.40E-02         & \multicolumn{1}{c|}{1.50E-03}         & 1.18E-02         & \multicolumn{1}{c|}{8.38E-04}         & 7.27E-03         \\ \hline
320                        & \multicolumn{1}{c|}{1.20E-05}         & 1.04E-04         & \multicolumn{1}{c|}{5.15E-06}         & 5.45E-05         & \multicolumn{1}{c|}{3.17E-06}         & 4.40E-05         \\ \hline
640                        & \multicolumn{1}{c|}{2.51E-06}         & 2.85E-05         & \multicolumn{1}{c|}{1.33E-06}         & 1.98E-05         & \multicolumn{1}{c|}{8.21E-07}         & 1.61E-05         \\ \hline
\end{tabular}
\caption{Global relative errors of the space-time LRNN-DG method with $\tau = 1/4$ in Example \ref{2ddvwe}}
\label{table2dlrnndgerr_ts}
\end{table}

Table \ref{table2dlrnndgerr}, Table \ref{table2dlrnnc0dgerr} and Table \ref{table2dlrnnc1dgerr} show the global relative $L^2$ errors and $H^1$ errors of the proposed methods for this problem. The settings of the proposed methods are as follows. We use 9 quadrature points in each dimension for all three methods. For the LRNN-DG method, we set $r = 0.6$ and penalty parameters $\beta_1 = \beta_2 = 5$. For the LRNN-$C^0$DG method, we use $r = 0.57$ and 39 collocation points on each face. For the LRNN-$C^1$DG method, we use $r = 0.44$ and 32 collocation points on each face. We investigate the relation of the mesh size $h$, the time sub-interval $\tau$, and the number of degrees of freedom ${\rm DoF}_\sigma$ to the accuracy. The errors of the proposed methods decrease as $h$ and $\tau$ decrease and ${\rm DoF}_\sigma$ increases. The three methods perform similarly.

Furthermore, the time interval of this problem is short, so we conduct another set of experiments with a fixed time sub-interval $\tau = 1/4$ and a smaller mesh size. We demonstrate that this setting allows the proposed method to achieve similar accuracies with fewer total degrees of freedom. Table \ref{table2dlrnndgerr_ts} shows the performance of the LRNN-DG method in this setting and the other parameters are the same as in the previous experiments. The other two methods have similar results that are omitted here.

We also calculate the traditional $L^2$ errors and $H^1$ errors at $t= 0.5$ to compare with the local DG method proposed in \cite{Ling2023LDG}. We find that the proposed methods achieve competitive accuracy with the local DG method in \cite{Ling2023LDG} when the total degrees of freedom of the proposed methods are similar to the number of spatial degrees of freedom of the local DG method. However, note that the space-time LRNN-DG methods only require solving a linear least square problem, while the local DG method involves many time iterations.

\begin{example}[3-D case]
\label{3ddvwe}
We present a numerical experiment to solve the diffusive-viscous wave equation in 3-D space $\Omega =(0,1)^3$ and time interval $I = (0,5)$ with mixed boundary conditions,
where $\Gamma_D = \left(\{0,1\}\times[0,1]\times[0,1]\right)\cup \left([0,1]\times\{0,1\}\times[0,1]\right)$, $\Gamma_N = \left((0,1)\times(0,1)\times\{0\}\right)$, $\Gamma_R = \left((0,1)\times(0,1)\times\{1\}\right)$. We use $\kappa=1$, and parameters $\gamma=56$, $\eta=5.6\times 10^{-8}$ and $\xi=1.19$ for dry sandstone. The exact solution is
\begin{equation}
u(t,x,y,z) = t^2 sin(\pi x)sin(\pi y)sin(\pi z).\nonumber
\end{equation}
The prescribed functions $g_D$, $g_N$, and $g_R$, the initial conditions $u_0$, $w_0$, and the source term $f$ can be derived from the exact solution.
\end{example}

\begin{table}[]
\centering
\begin{tabular}{|c|ccc|ccc|}
\hline
{$\tau$}, {$h$}             & \multicolumn{3}{c|}{$5/2$, $1/2$}                                                                                                    & \multicolumn{3}{c|}{$5/3$, $1/3$}                                                                        \\ \hline
\diagbox[width=5em,trim=l]{${\rm DoF}_\sigma$}{Norm} & \multicolumn{1}{c|}{ $L^{2}$ }           & \multicolumn{1}{c|}{ $H^{1}$ } & time                   & \multicolumn{1}{c|}{ $L^{2}$ } & \multicolumn{1}{c|}{ $H^{1}$ } & time \\ \hline
40                         & \multicolumn{1}{c|}{9.54E-02}                        & \multicolumn{1}{c|}{2.15E-01}              & 9.46E-01                        & \multicolumn{1}{c|}{5.35E-02}              & \multicolumn{1}{c|}{1.48E-01}              & 3.74E+00      \\ \hline
80                         & \multicolumn{1}{c|}{2.53E-02}                        & \multicolumn{1}{c|}{6.37E-02}              & 2.28E+00
& \multicolumn{1}{c|}{7.44E-02}              & \multicolumn{1}{c|}{1.97E-01}              & 2.85E+01      \\ \hline
160                        & \multicolumn{1}{c|}{9.28E-03}                        & \multicolumn{1}{c|}{3.25E-02}              & 8.31E+00                        & \multicolumn{1}{c|}{4.81E-03}              & \multicolumn{1}{c|}{3.52E-02}              & 1.98E+02      \\ \hline
320                        & \multicolumn{1}{c|}{2.22E-04} & \multicolumn{1}{c|}{1.19E-03}              & {3.35E+01} & \multicolumn{1}{c|}{3.56E-05}              & \multicolumn{1}{c|}{2.89E-04}              & 1.13E+03      \\ \hline
\end{tabular}
\caption{Relative errors of the space-time LRNN-DG method when $t=5$ and total training time in Example \ref{3ddvwe}}
\label{table3dlrnndgerr}
\end{table}

\begin{table}[]
\centering
\begin{tabular}{|c|cc|cc|}
\hline
{$\tau$}, {$h$}             & \multicolumn{2}{c|}{$5/2$, $1/2$}                                   & \multicolumn{2}{c|}{$5/3$, $1/3$}                                   \\ \hline
\diagbox[width=5em,trim=l]{${\rm DoF}_\sigma$}{Norm} & \multicolumn{1}{c|}{$L^{2}$}           & $H^{1}$ & \multicolumn{1}{c|}{$L^{2}$} & $H^{1}$ \\ \hline
40                         & \multicolumn{1}{c|}{1.35E-01}              & 3.33E-01              & \multicolumn{1}{c|}{5.02E-02}              & 1.96E-01              \\ \hline
80                         & \multicolumn{1}{c|}{1.10E-01}              & 3.08E-01              & \multicolumn{1}{c|}{3.80E-02}              & 1.54E-01              \\ \hline
160                        & \multicolumn{1}{c|}{1.33E-02}              & 4.14E-02              & \multicolumn{1}{c|}{2.22E-03}              & 1.01E-02              \\ \hline
320                        & \multicolumn{1}{c|}{4.61E-04}              & 2.26E-03              & \multicolumn{1}{c|}{4.24E-05}              & 2.36E-04              \\ \hline
\end{tabular}
\caption{Relative errors of the space-time LRNN-$C^0$DG method when $t=5$ in Example \ref{3ddvwe}}
\label{table3dlrnnc0dgerr}
\end{table}

\begin{table}[]
\centering
\begin{tabular}{|c|cc|cc|}
\hline
{$\tau$}, {$h$}             & \multicolumn{2}{c|}{$5/2$, $1/2$}                                             & \multicolumn{2}{c|}{$5/3$, $1/3$}                                   \\ \hline
\diagbox[width=5em,trim=l]{${\rm DoF}_\sigma$}{Norm} & \multicolumn{1}{c|}{$L^{2}$}           & $H^{1}$ & \multicolumn{1}{c|}{$L^{2}$} & $H^{1}$ \\ \hline
40                         & \multicolumn{1}{c|}{2.29E-01}                        & 4.08E-01              & \multicolumn{1}{c|}{2.62E-01}              & 3.96E-01              \\ \hline
80                         & \multicolumn{1}{c|}{8.90E-02}                        & 1.79E-01              & \multicolumn{1}{c|}{2.75E-02}              & 6.51E-02              \\ \hline
160                        & \multicolumn{1}{c|}{3.63E-02}                        & 6.34E-02              & \multicolumn{1}{c|}{5.67E-03}              & 1.87E-02              \\ \hline
320                        & \multicolumn{1}{c|}{2.33E-03} & 6.74E-03              & \multicolumn{1}{c|}{8.68E-05}              & 4.39E-04              \\ \hline
\end{tabular}
\caption{Relative errors of the space-time LRNN-$C^1$DG method when $t=5$ in Example \ref{3ddvwe}}
\label{table3dlrnnc1dgerr}
\end{table}

\begin{table}[]
\centering
\begin{tabular}{|c|lll|ccc|}
\hline
{$\tau$}  & \multicolumn{3}{c|}{$5/32$}                                                                                            & \multicolumn{3}{c|}{$5/64$}                                              \\ \hline
\diagbox[width=5em,trim=l]{$h$}{Norm} & \multicolumn{1}{c|}{$L^{2}$}                         & \multicolumn{1}{c|}{$H^{1}$}  & \multicolumn{1}{c|}{time}       & \multicolumn{1}{c|}{$L^{2}$}  & \multicolumn{1}{c|}{$H^{1}$}  & time     \\ \hline
1/4           & \multicolumn{1}{c|}{9.52E-03}                        & \multicolumn{1}{c|}{6.54E-02} & 9.19E+01                        & \multicolumn{1}{c|}{9.35E-03} & \multicolumn{1}{c|}{6.54E-02} & 1.91E+02 \\ \hline
1/8           & \multicolumn{1}{c|}{1.57E-03}                        & \multicolumn{1}{c|}{1.71E-02} & 8.21E+02                        & \multicolumn{1}{c|}{1.23E-03} & \multicolumn{1}{c|}{1.72E-02} & 1.66E+03 \\ \hline
1/16          & \multicolumn{1}{c|}{{9.36E-04}} & \multicolumn{1}{c|}{4.36E-03} & {7.48E+03} & \multicolumn{1}{c|}{2.80E-04} & \multicolumn{1}{c|}{4.35E-03} & {1.54E+04} \\ \hline
\end{tabular}
\caption{Relative errors of the IPDG-CN method when $t=5$ and total training time in Example \ref{3ddvwe}}
\label{table3dcndg}
\end{table}

We vary the mesh sizes, time sub-intervals, and numbers of degrees of freedom ${\rm DoF}_\sigma$ and report the relative errors of the three methods at $t=5$ in Tables \ref{table3dlrnndgerr}, \ref{table3dlrnnc0dgerr} and \ref{table3dlrnnc1dgerr}, respectively. We use 9 quadrature points in each dimension for all three methods. For the LRNN-DG method, we use $r = 0.25$, $\beta_1 = \beta_2 = 42$. For the LRNN-$C^0$DG method, we use $r = 0.16$ and 41 collocation points in each volume. For the LRNN-$C^1$DG method, we use $r = 0.17$ and 37 collocation points in each volume. From the three tables, we notice that the LRNN-DG method achieves higher accuracy than the other two methods.

We compare our proposed methods with the method of interior penalty DG and Crank-Nicholson (IPDG-CN). For the IPDG-CN method, we use piecewise $P_2$ polynomial functions on tetrahedral meshes and penalty parameter 27. We report the relative errors and total computing time of the IPDG-CN method for different values of time steps $\tau$ and mesh sizes $h$ in Table \ref{table3dcndg}. The total training time of the LRNN-DG method is also given in Table \ref{table3dlrnndgerr}. All the experiments are run on the same CPU using the FEniCS library (version 2019.1.0) for IPDG-CN method and PyTorch 1.12.1 for the LRNN-DG method. The results show that the LRNN-DG method is more accurate and faster than the IPDG-CN method. For example, the LRNN-DG method achieves a relative $L^2$ error of $2.22\times 10^{-4}$ in 33.5 seconds, while the IPDG-CN method achieves a relative $L^2$ error of $2.80\times 10^{-4}$ in 15400 seconds.

\section{Summary}
\label{summary}

In this paper, we introduce a novel approach to solve the diffusive-viscous wave equation using a hybrid of local randomized neural networks and discontinuous Galerkin methods. The LRNN-DG approach can handle time-dependent problems naturally and efficiently by using a space-time framework, and avoid the error accumulation that occurs in standard finite difference methods for time discretization. We show through numerical experiments that our approach can achieve higher accuracy by either increasing the number of degrees of freedom in each subdomain or decreasing the mesh size and sub-interval. Moreover, we demonstrate that LRNN-DG is very competitive in solving problems that require long-time simulation.

LRNN-DG methods offer promising results, but they also raise many challenges and open questions for future research. Some of these questions are: How can we choose the appropriate activation function for different problems? How can we optimize the selection of the parameters of the hidden layers instead of using a uniform distribution with r? How can we overcome the limitation of the approximation ability of the one-layer model when the number of degrees of freedom grows? Can we use other techniques or models (e.g. deep neural networks) to enhance the methods? How can we provide more rigorous theoretical foundations for our methods? How can we exploit parallel processing to speed up the methods? There are still many aspects that need to be investigated for the proposed methods.


\begin{thebibliography}{99}



\bibitem {Chen2013dvwe} X. H. Chen, Z. H. He, X. G. Pei and et al., Numerical simulation of frequency-dependent seismic response and gas reservoir delineation in
turbidites: a case study from China, \emph{J. Appl. Geophys.} {\bf 94} (2013) 22–30.

\bibitem {Dong2021locELM} 
S. Dong and Z. Li, Local extreme learning machines and domain decomposition for solving linear and nonlinear partial differential equations, \emph{Comput. Methods. Appl. Mech. Engrg.} {\bf 387} (2021), 114129.

\bibitem {Dong2021locELMW}
S. Dong and J. Yang, On computing the hyperparameter of extreme learning machines: Algorithm and application to computational PDEs, and comparison with classical and high-order finite elements, \emph{J. Comput. Phys.} {\bf 463} (2022), 111290.

\bibitem {Ee2018deepRitz} 
W. E and B. Yu, The deep Ritz method: A deep learning-based numerical algorithm for solving variational problems, \emph{Commun. Math. Stat.} {\bf 6} (2018), 1--12.

\bibitem {Goloshubin2000dvwe} G.M. Goloshubin and V.A. Korneev, Seismic low frequency effects for fluid saturated porous media, \emph{2000 SEG Annual Meeting. } (2000) 976–979.

\bibitem {Han2020well} W. Han, J. Gao, Y. Zhang and W. Xu, Well-posedness of the diffusive-viscous wave equation arising in geophysics, \emph{J. Math. Anal. Appl.} {\bf 486} (2020) 123914.

\bibitem {Han2021FEM} W. Han, C. Song, F. Wang and J. Gao, Numerical analysis of the diffusive-viscous wave equation, \emph{Comput. Math. Appl.} {\bf 102} (2021) 54–64.

\bibitem {Huang2006ELMtheorandapp} 
G. B. Huang, Q. Y. Zhu and C. K. Siew, Extreme learning machine: theory and applications, \emph{Neurocomputing} {\bf 70} (2006), 489--501.

\bibitem {Igelnik1995RNN1} 
B. Igelnik and Y. H. Pao. Stochastic choice of basis functions in adaptive function approximation and the functional-link net, \emph{IEEE Trans. Neural Netw.} {\bf 6} (1995), 1320-1329.

\bibitem {Igelnik1999RNN2} 
B. Igelnik, Y. H. Pao, S. R. LeClair and C. Y. Shen, The ensemble approach to neural-network learning and generalization, \emph{IEEE Trans. Neural Netw.} {\bf 10} (1999), 19-30.

\bibitem {Korneev2004dvwe} 
V. A. Korneev, G. M. Goloshubin, T. M. Daley and D. B. Silin, Seismic low-frequency effects in monitoring fluid-saturated reservoirs, \emph{Geophysics} {\bf 69} (2004) 522–532.

\bibitem {Liao2019deepRitzboundary} 
Y. Liao and P. Ming, Deep Nitsche method: Deep Ritz method with essential boundary conditions, \emph{Commun. Comput. Phys.} {\bf 29} (2021), 1365-1384. 

\bibitem {Ling2023LDG} D. Ling, C. Shu and W. Yan, Local discontinuous Galerkin methods for diffusive–viscous wave equations, \emph{J. Comput. Appl. Math.} {\bf 419} (2023) 114690.

\bibitem {Liu2014ELMfeasible} 
X. Liu, S. Lin, J. Fang and Z. Xu, Is extreme learning machine feasible? A theoretical assessment (part 1), \emph{IEEE Trans. Neural Netw. Learn. Syst.} {\bf 26} (2014), 7--20.


\bibitem {Pao1994RNN4} 
Y. H. Pao, G. H. Park and D. J. Sobajic, Learning and generalization characteristics of the random vector functional-link net, \emph{Neurocomputing} {\bf 6} (1994), 163--180.

\bibitem {Pao1992RNN3} 
Y. H. Pao and Y. Takefuji, Functional-link net computing: theory, system architecture, and functionalities, \emph{Comput.}, {\bf 25} (1992), 76-79.

\bibitem {Quintal2007dvwe} B. Quintal, S.M. Schmalholz, Y.Y. Podladchikov and et al., Seismic low-frequency anomalies in multiple reflections from thinly layered poroelastic reservoirs, \emph{SEG Tech. Program Expand. Abstr.} (2007), 1690–1695.

\bibitem {Raissi2019PINN} 
M. Raissi, P. Perdikaris and G. E. Karniadakis, Physics-informed neural networks: A deep learning framework for solving forward and inverse problems involving nonlinear partial differential equations, \emph{J. Comput. Phys.} {\bf 378} (2019), 686--707.

\bibitem {Shang2022DeepPetrov}
Y. Shang, F. Wang and J. Sun, Deep Petrov-Galerkin Method for Solving Partial Differential Equations, \emph{arXiv preprint} (2022), arXiv:2201.12995.

\bibitem {Sirignano2018DGM} 
J. Sirignano and K. Spiliopoulos, DGM: A deep learning algorithm for solving partial differential equations, \emph{J. Comput. Phys.} {\bf 375} (2018), 1339–1364.

\bibitem {Sun2022lrnndg}
J. Sun, S. Dong and F. Wang, Local Randomized Neural Networks with Discontinuous Galerkin Methods for Partial Differential Equations, \emph{arXiv preprint} (2022) arXiv:2206.05577.

\bibitem{Zang2020adversarialnns}
Y. Zang, G. Bao, X. Ye, and H. Zhou, Weak adversarial networks for high-dimensional partial dierential equations, \emph{J. Comput. Phys.} {\bf 411} (2020), 109409.

\bibitem {Zhang2023dg} M. Zhang, W. Yan, F. Jing and et al. Discontinuous Galerkin method for the diffusive-viscous wave equation, \emph{Applied Numerical Mathematics} {\bf 183} (2023) 118-139.

\bibitem {Zhao2014dvwe} H. Zhao, J. Gao and Z. Chen, Stability and numerical dispersion analysis of finite difference method for the diffusive-viscous wave equation, \emph{Int. J. Numer. Anal. Model. Ser. B} {\bf 5} (2014) 66–78.

\bibitem {Zhao2021dvwe} H. Zhao, J. Gao and J. Zhao, Modeling the propagation of diffusive-viscous waves using flux-corrected transport-finite-difference method, \emph{IEEE J. Sel. Top. Appl. Earth Obs. Remote Sens.} {\bf 7} (2014) 838–844.


\end{thebibliography}
\end{document}